\documentclass[10pt]{article}
\usepackage[utf8]{inputenc}
\usepackage{amssymb}
\usepackage{amsmath}
\usepackage{amsthm}
\usepackage{tikz}
\usepackage{circuitikz}
\usepackage{hyperref}
\usetikzlibrary{arrows,calc,decorations.pathreplacing,fadings,intersections}
\usetikzlibrary{external}

\newtheorem{theorem}{Theorem}
\newtheorem{lemma}[theorem]{Lemma}
\newtheorem{corollary}[theorem]{Corollary}
\newtheorem{proposition}[theorem]{Proposition}
\newtheorem{definition}{Definition}

\newtheorem{remark}{Remark}

\DeclareMathOperator{\proj}{proj}

\newcommand{\dd}{{\rm d}}

\newcommand{\diam}{\mathrm{diam}}

\newcommand{\vol}{\mathrm{vol}}

\title{
Illumination bodies  for ball-convex bodies}
\author{Brandon E. Oliva\footnote{Department of Mathematics, Case Western Reserve University, 2145 Adalbert Road, Cleveland, OH 44106, USA, {\tt beo20@case.edu}} \hskip 7mm
Elisabeth M. Werner  \footnote{Department of Mathematics, Case Western Reserve University, 2145 Adalbert Road, Cleveland, OH 44106, USA,
{\tt elisabeth.werner@case.edu}} \hskip 7mm Diliya Yalikun 
\footnote{Department of Mathematics, Case Western Reserve University, 2145 Adalbert Road, Cleveland, OH 44106, USA, {\tt dxy259@case.edu}} 
}
\date{}

\begin{document}

\maketitle

\begin{abstract}
We introduce illumination bodies and weighted illumination bodies in the class of $n$-dimensional $R$-ball convex bodies. 
These bodies may be regarded as dual counterparts of the recently introduced $R$-ball floating bodies. 
We prove that the illumination bodies are convex. We also show that a left derivative of  volume gives rise to a 
surface area measure for ball-convex bodies, the relative surface area measure. In dimension $2$, we 
establish an isoperimetric inequality for this relative surface area.
\end{abstract}

\smallskip\noindent
    \textbf{Keywords.} ball-convex bodies, affine surface area, floating body, illumination body
    
\smallskip\noindent
    \textbf{MSC 2020.}   Primary:52A20. Secondary 52A23, 52A40.

\par

\section{Introduction}

A convex body in $\mathbb{R}^n$ is a compact convex subset of $\mathbb{R}^n$ with nonempty interior. Let $C\subset \mathbb{R}^n$ be a convex body. A set $K\subset \mathbb{R}^n$ is called $C$-ball convex if it is the intersection of all translates of $C$ that contain it, that is, 
\[ K=\cap_{K\subset C+x}(C+x).  \] 
This class was introduced and studied in \cite{BlaschkeKK, LangiNazodiTalata}. In the important special case $C=R\, B_2^n$, where $B_2^n$ is the Euclidean unit ball centered at the origin, we call $K$ $R$-ball convex and denote the corresponding class by $\mathcal {K}_R$.
Ball-convex bodies have been investigated from several points of view; see, for instance, \cite{ArtsteinFlorentin, BezdekConnellyCsikos, BezdekNazodi, BezdekLangiNazodi,  KabluchkoMarynychMolchanov}. They are connected with optimal transport, the Kneser–Poulsen conjecture, Blaschke’s rolling theorem, 
 with Meissner polyhedra,  bodies of constant width, 
and isoperimetric problems; see  \cite{ArtsteinSadovskyWyczensany, Bezdek,  Drach, DrachTatarko, Hynd1, Hynd2, MartiniMontejanoOliveros} and the references therein. 
Isometries of classes of ball-convex bodies were classified in  \cite{ArtsteinChorFlorentin1, ArtsteinChorFlorentin2}. 
 For a nice recent survey on ball convexity we refer to \cite{BezdekLangiNazodi26}.
\vskip 2mm
Important closely related notions in convex geometry are the (convex) floating
body, the illumination body and the affine surface area of a convex body.
The floating body of a convex body is obtained by cutting off hyperplane caps 
of volume less than or equal to a fixed positive constant $\delta$.
The illumination body,  introduced in \cite{Werner94}, is  the set of those points whose convex hull with a given convex body  have fixed volume.
The first-order volume asymptotics of both the floating body and the illumination body lead to the affine surface area.
For the floating body this was established for all convex bodies in all dimensions 
in \cite{SW:1990} and for the illumination body this was shown in  \cite{Werner94}.
The affine surface area is a fundamental affine invariant associated with convex bodies in Euclidean space. It was first introduced by Blaschke~\cite{Blaschke:1923} for convex bodies in low dimensions under additional regularity assumptions, and later extended to arbitrary dimensions without such assumptions; see, for example, the survey~\cite{SW:2023}. 
\begin{equation}\label{asa}
as(K) = \int_{\partial K} \kappa(K, x)^\frac{1}{n+1} d\mu_K(x),
\end{equation}
where $\kappa(K,x) $ is the Gauss curvature of $x \in \partial K$, the boundary of $K$ and $\mu_K$ is the usual surface area measure on $\partial K$.
 The  importance of affine surface area is reflected in the affine isoperimetric inequality, in extremal and approximation problems, and in stochastic geometry; see  \cite{Boe1, PSchW:2022, Reitz1, Reitzner, SW4, TW1,TW2, TW4}. Many extensions have since been developed, including $L_p$-, Orlicz-, and stochastic affine surface areas \cite{Lutwak:1996, SW:2004, TW:2023, XZ:2022, Ye:2015, Ye:2016, Zawalski:2025, Zhao:2016}. 
Analogues of floating bodies and affine surface area have also been established in spherical and hyperbolic spaces \cite{BW:2016, BW:2018, BesauWerner1, BesauWerner}.  Very recently, illumination bodies have been 
introduced and studied in spaces of constant curvature  \cite{ABW} and on Riemannian manifolds \cite{ASW}.
\vskip 2mm
It is natural to ask for an analogous theory in the setting of ball convexity. A first step in this direction was the recent introduction of floating bodies for $C$-ball convex bodies  \cite{SWY, WernerYalikun}. 
It was shown in \cite{SWY} that in the Euclidean case $C=R\, B_2^n$, their volume asymptotics give rise to the {\em relative surface area}
\begin{equation}\label{Rasa}
as^R(K)= \int_{\partial K} \prod _{i=1}^{n-1} \left( \kappa_i (K, x) -\frac{1}{R} \right)^\frac{1}{n+1} d \mu_K(x),
 \end{equation}
where the $\kappa_i(K, x)$ are the principal curvatures of $K$ at $x$, see below.
 The factor 
\[ \prod_{i=1}^{n-1} \left(\kappa_i(K,x)-\frac{1}{R}\right)
\] is the natural {\em relative Gauss curvature} in the $R$-ball convex setting. As $R\to \infty$, $as^R$ recovers the classical affine surface area (\ref{asa}). Moreover, it shares several structural properties with affine surface area, including rigid motion invariance, valuation property, and upper semicontinuity \cite{SWY}.  Its appearance in approximation problems  \cite{FodorKeveiVigh, FodorPapvari, FodorGrunfelder} further indicates that it is the natural surface-area functional for $R$-ball convex bodies.
\vskip 2mm
The aim of the present paper is to develop the exterior counterpart. We introduce $R$-ball illumination bodies by replacing the usual convex hull in the classical illumination construction with the $R$-ball convex hull. 
This produces an exterior family of bodies adapted to the geometry of $R$-ball convexity.
Our first result shows that $R$-ball illumination bodies are convex. We also give an example showing that they need not be $R$-ball convex. 
\par
Our main asymptotic theorem shows that the left derivative of their volume gives  precisely the relative surface area measure (\ref{Rasa}). Hence the classical duality between floating bodies and illumination bodies persists in the ball-convex setting: the inward and outward constructions lead to the same curvature measure, with the 
Gauss curvature 
\[
\prod_{i=1}^{n-1} \kappa_i (K,x)
\] replaced by the  relative Gauss curvature
\[ 
\prod_{i=1}^{n-1} \left(\kappa_i(K,x)-\frac{1}{R}\right).
\]
We also introduce weighted $R$-ball illumination bodies. Their volume derivatives yield the relative $L_p$ surface areas introduced in \cite{WernerYalikun}, which are the $R$-ball analogues of Lutwak’s $L_p$ 
affine surface areas \cite{Lutwak:1996, SW:2004}. Finally, in dimension two, we prove an isoperimetric inequality for the relative surface area.
\vskip 4mm
\noindent
{\bf Further notation.} 
The  closed Euclidean ball centered at $a$ with radius $r$ is $B^n_2(a,r)$. We write in short $B^n_2=B^n_2(0,1)$ and $S^{n-1}= \partial B^n_2$. The Euclidean norm on $\mathbb{R}^n$
is $\| \cdot\|$.
We denote by  $H\left(x, u\right)$ the hyperplane through $x$ orthogonal to the vector $u$. The line segment joining $x$ and $y$ is $[x,y]$.
For a convex body $K$ in $\mathbb{R}^n$, $\vol_n(K)$ denotes its volume. 
By $N(x)$ or $N_K(x)$ we denote the unit outer normal to $K$ in the boundary point $x$.
For more information and details on convex bodies we refer to e.g., the books  \cite{Gardner,  SchneiderBook}.
\par
\noindent
Finally,  $c$, $c_1$, $d$,  $d_1$, etc.,  are absolute constants that may change from line to line.

\vskip 4mm
\noindent
{\bf Acknowledgements.}
Elisabeth Werner was supported by NSF grant DMS-2506790 and by Simons Fellowship SFI-MPS-SFM-00020853. She also 
wants to thank the University of M\"unster (Germany) for their hospitality. It was during her stay there as a M\"unster Research Fellow that part of the work was completed.
\par
\noindent
Diliya Yalikun was supported by NSF grant DMS-2506790.

 \medskip
 \noindent
    \textbf{Data availability statement}.
    Data sharing not applicable to this article, as no datasets were generated or analyzed during the current study. The corresponding author declares, on behalf of all authors, that there are no conflicts of interest.

\section{Results}

Let $K$ be a convex body in $\mathbb{R}^n$. For a point $x$ on the boundary $\partial K$ of $K$,  we denote by $N_K(x)$ the outer unit normal vector of $K$ at $x$. 
By a theorem of Rademacher,  e.g., \cite{Rademacher}, $N_K(x)$ is unique a.e. on $\partial K$. The map $N: \partial K \to S^{n - 1}$, $x \to N(x)=N_K(x)$,  is called the spherical image map or Gauss map.  
Its  differential resp. generalized differential is called the Weingarten map. By a theorem of Alexandrov \cite{Alexandroff} and Busemann and Feller \cite{Buse-Feller}
these generalized derivatives exist a.e. on $\partial K$.
The eigenvalues of the Weingarten map are the principal curvatures $\kappa_i(K, x)$ of $K$ at $x$.
The (generalized) Gauss curvature is $\kappa(K, x) = \prod_{i=1}^{n-1} \kappa_i(K,x)$.
For more information and the details we refer to e.g., \cite{Gardner,  SchneiderBook}.
\vskip 4mm
Let $\xi \in S^{n-1}$ and $\alpha \in \mathbb{R}$. Then $H=\{x \in \mathbb{R}^n: \langle x, \xi \rangle=\alpha\}$ is the hyperplane orthogonal to $\xi$ at distance $\alpha$
to the origin and $H^+=\{x \in \mathbb{R}^n: \langle x, \xi \rangle \geq \alpha\}$ and $H^-=\{x \in \mathbb{R}^n: \langle x, \xi \rangle\leq\alpha\}$ are the two closed halfspaces
determined by $H$.
We recall that for a convex body $K$ in $\mathbb{R}^n$, for $\delta \geq 0$, the  illumination  body \cite{Werner94}  is defined as 
\begin{equation}\label{illubody}
K^{\delta}=\{x\in\mathbb{R}^n: \vol_n(\mathrm{conv}[K,x] \setminus K)\leq \delta \}\quad.
\end{equation}
Note that the illumination body is always convex, see e.g., \cite{MordhorstW}. 
\vskip 4mm
\noindent
We now introduce  an analog to this illumination  body for $C$-ball convex bodies, the $C$-ball illumination body.  
We need the $C$-convex hull  of a $C$-ball convex body $K$ and a point $x$ which is defined as follows, see e.g., \cite{LangiNazodiTalata},
\begin{equation}\label{spindlehull}
[K,x] _C= \bigcap_{K\cup \{x\} \subseteq (C+y)}C+y.
\end{equation}
When $C=RB^n_2$, we write in short $[K,x] _R= [K,x] _{R B^n_2}$. 
\vskip 2mm
\begin{definition}\label{def:R-illu} 
Let $\delta \geq 0$. Let $K$ be a $C$-ball convex body.
 We define the $C$-ball illumination  body $I_C^\delta(K)$ by 
\begin{equation}\label{L-float}
I_C^\delta(K) = \{ x \in \mathbb{R}^n: \vol_n([K,x]_C \setminus K)\leq \delta\}.
\end{equation}
In particular,  when $C=R\, B^n_2$, we  
 write in short $I_{RB^n_2}^\delta(K)=I_R^\delta(K)$ and call it the $R$-ball illumination  body of $K$.
\end{definition}
\vskip 3mm
\noindent
We will also consider weighted versions which we introduce next.
\vskip 2mm
\begin{definition}\label{def:R-illu-weighted} 
Let $K$ be a $C$-ball convex body. Let  $U \supset K$ be an open set. Let $\varphi : U \to (0,\infty)$ be a continuous and integrable function. Let $A$ be a measurable set in $\mathbb{R}^n$. We define a measure $\vol_n^\varphi$ on $\mathbb{R}^n$ by
\[
    \vol_n^\varphi(A)
    = \int_{A \cap U} \varphi \, \dd \lambda_n,
\]
where $\lambda_n$ denotes Lebesgue measure.
For $\delta \geq 0$, the weighted R-illumination body $I_{C,\varphi}^\delta(K)$ is defined by
\[
    I_{C, \varphi}^\delta(K)
    = \left\{ x \in \mathbb{R}^n : \vol_n^\varphi([K, x]_C\setminus K) \leq \delta \right\}.
\]
In particular,  when $C=R\, B^n_2$, we  
 write in short $I_{RB^n_2, \varphi}^\delta(K)=I_{R, \varphi}^\delta(K)$ and call it the  weighted $R$-ball illumination  body of $K$.
\end{definition}
\vskip 2mm
\noindent
Obviously, when $\varphi \equiv 1$, $I_{C,\varphi}^\delta(K)= I_C^\delta(K)$.  Moreover, $K \subseteq I_{C,\varphi}^\delta(K)$. 
In this paper we will mostly treat the case $C=R\, B^n_2$.  Despite a slight abuse of notation, there should be 
no issue if we   
 write in short  $I_{R\, B^n_2,\varphi}^\delta(K) =  I_{R,\varphi}^\delta(K)$ and $I_{R\, B^n_2}^\delta(K)=I_R^\delta(K)$. 
\vskip 3mm
\noindent
 In the definition of the $R$-ball illumination  body  the $R$-ball convex hull  replaces the usual convex hull  of the definition of the classical illumination body (\ref{illubody}). 
 When  $R \to \infty$, then $ [K,x]_R\to [K,x]$ and we recover the usual illumination body \cite{Werner94} $K^\delta$,  resp. in the weighted case,  the weighted versions $
 \mathcal{I}^\varphi_\delta(K)$ of \cite{ABW}.
Observe that 
\begin{equation}\label{inclusion}
I_R^\delta(K) \subset K^\delta  \hskip 3mm \text{and} \hskip 3mm I_{R, \varphi}^\delta(K)  \subset \mathcal{I}^\varphi_\delta(K).
\end{equation}
\par
\noindent
Note also that we  have for
all  affine maps $T$ on $\mathbb{R}^n$ with determinant $\mbox{det}(T)\neq 0$ 
that $K$ is $R$-ball convex if and only if $T(K)$ is $T(R\, B^n_2)$-ball convex
and that for all $\delta \geq 0$ 
\begin{equation*}\label{Affine:map:floating:body}
I^\delta_{T (RB^n_2)}(T(K))=T\left(I_R^{\frac{\delta} {| det (T)|}} (K)\right) \hskip 3mm \text{and} \hskip 3mm  I_{T (R B^n_2),  \varphi \circ T^{-1}}^\delta(T K) = T(I_{R,  \varphi}^\frac{\delta}{| det (T)|}( K)).
\end{equation*}
\vskip 3mm
\noindent
{\bf Remarks and Examples.}
\vskip 2mm
\noindent
(i) We show in the next proposition that the $R$-ball illumination body is always convex. The $R$-ball illumination body of an  $R$-ball convex body is not $R$-ball convex in general.
We give  examples. 
\par
\noindent
a) The easiest example is  $R B^n_2$. It is $R$-ball convex and its $R$-ball illumination body is $(R+ \alpha_n \delta^\frac{1}{n+1}) B^n_2$ which is not $R$-ball convex 
but $\tilde{R}$-ball convex for all $\tilde{R} \geq (R+ \alpha_n \delta^\frac{1}{n+1})$. 
\par
\noindent
b) Let $R=1$ and let $0 < \varepsilon <1$. Let $L_\varepsilon$ be the lens in $\mathbb{R}^2$, 
\begin{equation}\label{lens}
L_\varepsilon = B^2_2((-\varepsilon,0), 1) \cap B^2_2((\varepsilon,0), 1).
\end{equation}
Then $L_\varepsilon$ is $1$-ball convex.
Let 
$$ 
x_+ =\left(1+\frac{\varepsilon}{2},0\right), \hskip 4mm x_- =\left(-1-\frac{\varepsilon}{2},0\right)
$$
and let $\delta_\varepsilon$ be such that 
\begin{equation*} 
\delta_\varepsilon= \vol_2\left( [x_{\pm}, L _\varepsilon]_R\setminus L_\varepsilon \right).
\end{equation*}
Then $x_+ \in \partial B^2_2((\frac{\varepsilon}{2},0),1)$ and $L_\varepsilon \subset B^2_2((\frac{\varepsilon}{2},0),1)$ and we have by construction that 
\begin{equation*}
x_{\pm} \in \partial I_R^{\delta_{\varepsilon}}(L_\varepsilon).
\end{equation*}
This means that 
\begin{equation*}
\diam  \left( I_R^{\delta_{\varepsilon}}(L_\varepsilon)\right)  \geq \|x_+ - x_-\| =2+\varepsilon >2.
\end{equation*}
But for every  $1$-ball convex body $K$ we have that $\diam (K) \leq 2$. Therefore $I_R^{\delta_{\varepsilon}}(L_\varepsilon)$ is not $1$-ball convex.
Note however that $I_R^{\delta_{\varepsilon}}(L_\varepsilon)$ is e.g., $(1+2 \varepsilon)$-ball convex. We explain this in Proposition \ref{R-convex}.
\vskip 2mm
\noindent
(ii) We cannot expect that the {\em weighted} $R$-illumination body is convex in general. Examples are given in \cite{ABW} for the weighted illumination body $\mathcal{I}^\varphi_\delta(K)$.
Such an example, appropriately modified, applies in our situation as well. To see that, let  $U=(-2,2)^2$, $a=R-1$,  and 
  $$B_1=B^2_2((0,-a), R), \hskip 2mm B_2=B^2_2 ((0,a), R),  \hskip 2mm B_3=B^2_2((-a,0), R), \hskip 2mm B_4=B^2_2((a,0), R).$$ 
  Let $S= \cap _{i=1}^4 B_i$ be $R$-ball polytope corresponding to the square in $\mathbb{R}^2$.       
We consider the weight $\varphi \equiv 1$ on $U$. For $e_1=(1,0)$ and $p_1 = (2R-1)^{\frac{1}{2}}$,  we have that $ \vol_2^{\varphi}([p_1e_1,S]_R\setminus S) \sim 2-\frac{c_1}{R}$, where $c_1>0$ is an absolute constant.
Therefore $\rho(I_{R,\varphi}^\delta(S),p_1 e_1)$ can be made arbitrarily large for $\delta\geq 2-\frac{c_1}{R}$. Here $\rho(K,u) = \max \{\lambda \geq 0: \lambda u \in K\}$ is the radial function of $K$.
For $u=(\frac{1}{\sqrt{2}},\frac{1}{\sqrt{2}})$ we have for a finite $p_2 < 1$,  $\vol_2^\varphi([p_2 \, u ,S]_R\setminus S) \sim  4(1-\frac{c_2}{R})$, where $c_2>0$ is an absolute constant and 
therefore $\rho(I_{R,\varphi}^\delta(S),p_2 \, u) <\infty$ if $\delta < 4(1-\frac{c_2}{R})$.
So for $\delta\in [2-\frac{c_1}{R},4(1- \frac{c_2}{R}))$ 
the weighted illumination body $I_{R,\varphi}^\delta(K)$ is  not convex.
\par
\noindent
However, for an $R$-ball convex body with $0$ in its interior, the weighted $R$-illumination body is always star shaped by definition. 

\vskip 3mm
\begin{proposition}\label{convex}
Let $K$ be an $R$-ball convex body and let $I_R^\delta(K)$ be its $R$-illumination body.
Then $I_R^\delta(K)$ is convex.
\end{proposition}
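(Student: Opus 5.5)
The natural route is to show that the function
\[
f(x)=\vol_n([K,x]_R\setminus K)
\]
is quasi-convex, i.e.\ $f(\lambda x+(1-\lambda)y)\le \max\{f(x),f(y)\}$ for $x,y\in\mathbb{R}^n$ and $\lambda\in[0,1]$. Then every sublevel set $\{f\le\delta\}=I_R^\delta(K)$ is convex. This mirrors the classical proof for $K^\delta$, but the $R$-ball hull is not monotone under convex combinations in the same simple way as the convex hull. So I would instead aim for the stronger inclusion
\begin{equation*}
[K,z]_R \subseteq [K,x]_R\cup[K,y]_R, \qquad z=\lambda x+(1-\lambda)y .
\end{equation*}
This inclusion only makes sense to hope for when $z\notin K$. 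If $z\in K$, then $[K,z]_R=K$ and $f(z)=0$, and there is nothing to prove. The inclusion would give $f(z)\le \vol_n\bigl(([K,x]_R\setminus K)\cup([K,y]_R\setminus K)\bigr)$, which is still not $\le\max$. So quasi-convexity needs a more refined argument, and the plan is adjusted as follows.

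Fix the segment $[x,y]$ and reduce to a one-parameter family. Let $z_t=(1-t)x+ty$ and $g(t)=f(z_t)$. It suffices to show that $g$ has no strict interior local maximum, i.e.\ $g$ is quasi-convex on $[0,1]$. I would use the description of $[K,z]_R$ as the intersection of all balls $B^n_2(c,R)$ with $c\in B:=\bigcap_{w\in K}B^n_2(w,R)$ (the ``$R$-dual'' of $K$, a nonempty $R$-ball convex set) and $\|c-z\|\le R$. Thus
\[
[K,z]_R=\bigcap_{c\in B\cap B^n_2(z,R)} B^n_2(c,R).
\]
The key monotonicity is that the family of admissible centers $B\cap B^n_2(z_t,R)$ varies nicely in $t$. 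By convexity of the Euclidean norm,
\[
B\cap B^n_2(x,R)\cap B^n_2(y,R)\subseteq B\cap B^n_2(z_t,R).
\]
Hence $[K,z_t]_R\subseteq [K,x,y]_R$ for all $t$. This is again only an upper bound.

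To get a genuine $\max$ bound, I would try a support/slicing argument in the spirit of the classical proof in \cite{MordhorstW}. Write $[K,z]_R\setminus K$ as a union of ``spindle segments'': for each boundary point $w$ of the shadow boundary, the short arcs of radius $R$ joining $z$ to $K$. Then express $f(z)$ as an integral over directions. For each direction $u\in S^{n-1}$, consider the radial function from a fixed interior point, or better, the cone from $z$. The aim is to show that each integrand is a convex function of $z$ along lines outside $K$. If that holds, $f$ itself is convex on $\mathbb{R}^n\setminus \mathrm{int}K$ along segments avoiding $K$. For segments meeting $K$, split the segment at $K$: $f$ vanishes on the middle piece, and on each end piece it is convex and vanishes at the endpoint touching $K$, hence monotone increasing outward. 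This gives $g(t)\le\max\{g(0),g(1)\}$.

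The main obstacle is this convexity of $f$ along segments outside $K$, since the spindle hull changes shape nonlinearly with $z$. My first attempt would be the two-dimensional case, where $[K,z]_R\setminus K$ is bounded by two circular arcs of radius $R$ from $z$ tangent to $K$ and an arc of $\partial K$. There the area can be written as the Euclidean triangle-cone area, which is convex in $z$ by the classical argument, plus circular-segment corrections depending only on chord lengths $|z-w|$. Their sum should be checked to remain convex. In higher dimensions I would reduce to this planar case by slicing with $2$-planes containing the line through $x$ and $y$ and integrating.
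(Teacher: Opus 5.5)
\textbf{There is a genuine gap.} Your outer reduction is sound and essentially the paper's. Quasi-convexity of $f(z)=\vol_n([K,z]_R\setminus K)$ gives convex sublevel sets, and segments meeting $K$ reduce to their two end pieces. The paper likewise restricts to segments $[z',z'']$ missing $K$ and proves convexity of $V_K^R(z)=\vol_n([z,K]_R)$ there. But the decisive step, convexity of $f$ along segments disjoint from $K$, is never established. You call it the main obstacle, never pin down the integrand whose convexity you want, and leave the planar computation as ``should be checked''.

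The planar plan also does not work as stated. The tangency points $w_i(z)$ of the radius-$R$ arcs lie beyond the straight tangency points. For example, with $K=B_2^2$, $z=(2,0)$, $R=3$, the chord $[z,w_i]$ passes at distance about $0.94$ from the centre. So the chords cut through $K$, and ``cone plus circular segments'' is not a decomposition of $[K,z]_R\setminus K$. Moreover, every piece depends on $z$ through $w_i(z)$, so the classical convexity of $z\mapsto\vol_2(\mathrm{conv}(K\cup\{z\}))$ says nothing about your cone part.

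The reduction from $\mathbb{R}^n$ to the plane fails as well. For a $2$-plane $P\ni z$ one has $\mathrm{conv}(K\cup\{z\})\cap P=\mathrm{conv}((K\cap P)\cup\{z\})$, but the $R$-ball hull does not commute with such sections. Its tangent cone at $z$ is strictly larger than the cone over $K$, so some planes through $z$ that miss $K$ still cut $[K,z]_R$ in a two-dimensional set. In addition, decomposing $\mathbb{R}^n$ into the pencil of planes through the line $\ell$ of your segment introduces the weight $\mathrm{dist}(\cdot,\ell)^{n-2}$. So planar area convexity would not integrate to volume convexity even if it held.

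The idea the paper uses instead is a one-dimensional, shadow-system-type slicing in the direction of motion. Put $u=(z''-z')/\|z''-z'\|$ and $z=(1-\lambda)z'+\lambda z''$. Then $z,z',z''$ project to the same point of $u^\perp$. By Fubini, $V_K^R(z)=\int (s_1(\bar y)-s_0(\bar y))\,d\bar y$, where $\bar y+[s_0,s_1]u$ are the chords of $[z,K]_R$ parallel to $u$. One then shows chordwise that $[s_0,s_1]\subseteq(1-\lambda)[s_0',s_1']+\lambda[s_0'',s_1'']$. Write a chord point as $y=(1-\eta)z+\eta x$, and set $y'=(1-\eta)z'+\eta x$ and $y''=(1-\eta)z''+\eta x$. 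These lie on the same line and satisfy $y=(1-\lambda)y'+\lambda y''$. Integrating gives $V_K^R(z)\le(1-\lambda)V_K^R(z')+\lambda V_K^R(z'')$, with no planar geometry or arc-area computations.

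If you adopt this route, note where the real work is. The memberships $y'\in[z',K]_R$ and $y''\in[z'',K]_R$ are immediate when $x\in K$, i.e.\ for points of $\mathrm{conv}(K\cup\{z\})$. For the remaining points of $[z,K]_R$ one must check that $x$ can be chosen in $[z',K]_R\cap[z'',K]_R$. This is exactly the nonlinearity of the $R$-hull you flagged, and the paper's text asserts it without further justification.
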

\vskip 3mm
\noindent
The following theorems are the main results  of the  paper. We denote by 
$\mathcal{K}_R^+$ the set of all $R$-ball convex bodies in $\mathbb{R}^n$
such that for all $x \in \partial K$, $\kappa_i(K,x) >\frac{1}{R}$, for all $1 \leq i \leq n-1$. 
\vskip 2mm
\begin{theorem}\label{theorem:limit} 
Let $K \in \mathcal{K}_R^+$ and let $I_R^\delta(K)$ be its $R$-ball illumination body.  Then
$$
\lim_{\delta \to 0} \frac{\vol_n\left(I_R^\delta(K)\right)-\vol_n(K) } {\delta^\frac{2}{n+1}} =  d_n \int_{\partial K} \prod _{i=1}^{n-1} \left( \kappa_i (K, x) -\frac{1}{R} \right)^\frac{1}{n+1} d \mu_K(x), 
$$
where $d_n=\frac{1}{2} \left( \frac{n(n+1)}{\vol_{n-1}(B^{n-1}_{2})}\right)^\frac{2}{n+1}$.
\end{theorem}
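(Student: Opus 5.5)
We follow the scheme of \cite{Werner94} for the classical illumination body, with the convex hull replaced by the $R$-ball hull. By Proposition \ref{convex}, $I_R^\delta(K)$ is convex and contains $K$, and it converges to $K$ as $\delta\to 0$. Write the volume difference in "normal coordinates" over $\partial K$. Since $K\in\mathcal K_R^+$, $K$ has an outer normal $N_K(x)$ at every $x\in\partial K$, and the map $(x,t)\mapsto x+tN_K(x)$ is injective for small $t$. Let $x_\delta\in\partial I_R^\delta(K)$ be the point on the ray $x+tN_K(x)$, and put $\Delta_\delta(x)=\|x_\delta-x\|$. Then
\[
\vol_n(I_R^\delta(K))-\vol_n(K)=\int_{\partial K}\int_0^{\Delta_\delta(x)}\prod_{i=1}^{n-1}\bigl(1+t\kappa_i(K,x)\bigr)\,dt\,d\mu_K(x),
\]
which equals $\int_{\partial K}\Delta_\delta(x)(1+o(1))\,d\mu_K(x)$ uniformly, because $\Delta_\delta\to0$ uniformly. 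So the theorem reduces to two facts. First, the pointwise limit
\[
\lim_{\delta\to0}\frac{\Delta_\delta(x)}{\delta^{2/(n+1)}}=d_n\prod_{i=1}^{n-1}\Bigl(\kappa_i(K,x)-\frac1R\Bigr)^{\frac1{n+1}}.
\]
Second, a uniform bound $\Delta_\delta(x)\le c\,\delta^{2/(n+1)}$, so that dominated convergence applies.

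For the local computation, fix $x\in\partial K$ with $x=0$ and $N_K(x)=e_n$, so that $K$ lies below the graph $-f$ near $0$. Here $f(y)=\tfrac12\sum\kappa_i y_i^2+o(|y|^2)$ in principal coordinates, and since $\kappa_i>1/R$ the osculating paraboloid is well defined. For $z=te_n$, the boundary of $[K,z]_R$ near $z$ consists of arcs of $R$-spheres through $z$ that are tangent to $\partial K$ along a "shadow boundary". To second order, an $R$-sphere through $z$ looks like the paraboloid $t-\tfrac{1}{2R}|y|^2$. Hence $[K,z]_R\setminus K$ is asymptotically the region
\[
\Bigl\{(y,s):\ -\tfrac12\textstyle\sum\kappa_iy_i^2\le s\le t-\tfrac{1}{2R}|y|^2\Bigr\},
\]
plus the part between tangent spheres and $K$, which is of lower order. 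This is exactly the convex-hull computation of \cite{Werner94} with $K$ replaced by the paraboloid of curvatures $\kappa_i-1/R$. Concretely, subtracting the quadratic form $\frac{1}{2R}|y|^2$ is an "$R$-shear" taking $R$-spheres to hyperplanes up to higher order.

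With that replacement the classical computation gives a volume of the form $c\,t^{(n+1)/2}\prod(\kappa_i-1/R)^{-1/2}$. Setting this equal to $\delta$ and solving for $t$ produces $d_n\prod(\kappa_i-1/R)^{1/(n+1)}\delta^{2/(n+1)}$. The constant $d_n$ comes out exactly as in Werner's theorem. To make this rigorous I would sandwich $\partial K$ locally between the paraboloids with curvatures $\kappa_i\pm\varepsilon$. Monotonicity of $[\,\cdot\,,z]_R$ under inclusion then gives upper and lower bounds on $\vol_n([K,z]_R\setminus K)$ that localize, since $\Delta_\delta\to 0$ and $\kappa_i>1/R$ force the relevant cap to shrink.

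The uniform bound should follow from compactness. Because $K\in\mathcal K_R^+$, there are $r>0$ and $\rho<R$ such that $B_2^n(x-rN,r)\subseteq K\subseteq B_2^n(x-\rho N,\rho)$ locally, which is a rolling condition with curvatures bounded and bounded away from $1/R$. The inner ball gives $\vol_n([K,z]_R\setminus K)\ge c\,t^{(n+1)/2}$ uniformly in $x$, hence $\Delta_\delta(x)\le c\,\delta^{2/(n+1)}$.

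I expect the main obstacle to be the precise local analysis of the $R$-ball hull $[K,z]_R$. Unlike the convex hull, it is not a cone, and the spheres defining its boundary near the shadow boundary have to be controlled. The key step to verify carefully is that the error between the true region and the sheared paraboloid cap is $o(t^{(n+1)/2})$. I would handle this by explicit comparison: for a paraboloid $P$ one can compute $[P,z]_R$ near $z$ through the tangency condition for spheres of radius $R$, and then use the sandwich argument above.
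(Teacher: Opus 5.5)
Your argument is correct in outline, but it takes a different route from the paper at three points.

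\textbf{Volume parametrization.} You write $I_R^\delta(K)\setminus K$ as a union of outer normal segments $x+tN_K(x)$. This is legitimate because $I_R^\delta(K)$ is convex (Proposition \ref{convex}) and $\partial K$ is $C^2$. The paper instead uses radial coordinates from an interior origin (Lemma \ref{vol-diff}). There $x^\delta$ lies off the normal, so the paper must carry the factor $\langle x/\|x\|,N_K(x)\rangle$ and reduce to the height $\cos\theta\,\|x^\delta-x\|$ above the tangent plane. Your choice gives a cleaner local statement: you only need $\vol_n([K,x+tN_K(x)]_R\setminus K)\sim \frac{2^{(n+1)/2}\vol_{n-1}(B_2^{n-1})}{n(n+1)}\,t^{\frac{n+1}{2}}\prod_i(\kappa_i-\frac1R)^{-1/2}$ as $t\to 0$, and inverting this gives exactly $d_n$.

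\textbf{Domination.} You use a uniform bound $\Delta_\delta\le c\,\delta^{2/(n+1)}$ from a uniform inner rolling ball. This needs bounded curvature, i.e.\ the $C^2$ reading of $\mathcal K_R^+$. The quickest way to get it is from $[K,z]_R\supseteq\mathrm{conv}(K\cup\{z\})$, which is the paper's inclusion $I_R^\delta(K)\subseteq K^\delta$. The paper instead dominates by $\gamma_n r_K(x)^{-(n-1)/(n+1)}$ via Lemma 2 of \cite{Werner94}, which needs no curvature bounds at all.

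\textbf{Local computation.} Your volume-preserving shear $(y,s)\mapsto(y,s+|y|^2/(2R))$ turns $R$-spheres into hyperplanes up to $O(|y-y_c|^4)$. On the relevant region $|y|,|y_c|=O(\sqrt t)$ this error is $O(t^2)$. So the problem reduces to the classical convex-hull computation for the paraboloid with curvatures $\kappa_i-1/R$. The paper instead computes directly with $R$-spheres tangent to the approximating ellipsoid (Lemma \ref{ellipsoid}). The shear is the conceptual reason $\kappa_i$ becomes $\kappa_i-1/R$.

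\textbf{Two things need fixing.}

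First, the displayed region $\{-\frac12\sum\kappa_iy_i^2\le s\le t-\frac1{2R}|y|^2\}$ is not asymptotically $[K,z]_R\setminus K$. Since $\kappa_i>1/R$, the axial $R$-sphere through $z$ lies above $\partial K$ everywhere near $x$. So this region is the whole lune between $K$ and $B_2^n(z-RN_K(x),R)$, and its volume does not tend to $0$ with $t$. The boundary of $[K,z]_R$ near $z$ is the envelope of the tilted spheres through $z$, to second order $s=t+\langle y_c,y\rangle/R-|y|^2/(2R)$, that are tangent to $\partial K$. After the shear this envelope is the tangent cone from $z$ to the paraboloid. That is what your next sentence actually uses, so delete the display and the ``lower order'' remark.

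Second, monotonicity of $[\,\cdot\,,z]_R$ alone does not compare $[K,z]_R\setminus K$ with $[P^\pm,z]_R\setminus P^\pm$, because the subtracted body changes. Take $P^-\subseteq K\subseteq P^+$ near $x$, using bounded comparison bodies such as the paper's $\mathcal E(\varepsilon^\pm)$: an unbounded paraboloid lies in no $R$-ball. Then use
\[
[P^-,z]_R\setminus P^+\subseteq [K,z]_R\setminus K\subseteq \bigl([P^+,z]_R\setminus P^+\bigr)\cup\bigl((P^+\setminus P^-)\cap([K,z]_R\setminus K)\bigr).
\]
Next check that these sets lie in a cylinder of radius $O(\sqrt t)$ about the normal, which holds because $\kappa_i-1/R$ is bounded below. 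Inside that cylinder the sliver $P^+\setminus P^-$ has volume $O(\varepsilon\, t^{(n+1)/2})$. Finally, you need the localization statement that $[K,z]_R\setminus K$ is determined by $K$ near $x$, since $P^\pm$ sandwich $K$ only in a cap.
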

\vskip 3mm
\noindent
Note that if $R \to \infty$, we recover on the right hand side of the theorem the classical affine surface area (\ref{asa}).
This led  naturally to calling  the expression on the right hand side  of the above theorem {\em relative surface area} in \cite{SWY},
\begin{equation*}\label{Rasa}
as^R(K)=\int_{\partial K} \prod _{i=1}^{n-1} \left( \kappa_i (K, x) -\frac{1}{R} \right)^\frac{1}{n+1} d \mu_K(x).
\end{equation*}
\vskip 3mm
\noindent
A derivative of volume involving the weighted $R$-ball illumination body is given in the next theorem.
\vskip 2mm
\begin{theorem}\label{theorem:weighted limit} 
Let $K \in \mathcal{K}_R^+$ and let  $U \supset K$ be an open set. Let $\varphi : U \to (0,\infty)$ be a continuous and integrable function.
Let  $I_{R, \varphi}^\delta(K)$ be the weighted $R$-ball illumination  body of $K$. Then
$$
\lim_{\delta \to 0} \frac{\vol_n\left( I_{R, \varphi}^\delta(K)\right)-\vol_n(K) } {\delta^\frac{2}{n+1}} =  
d_n \int_{\partial K} \prod _{i=1}^{n-1} \left( \kappa_i (K, x) -\frac{1}{R} \right)^\frac{1}{n+1} \varphi(x)^{-\frac{2}{n+1}}\, d \mu_K(x), 
$$
where $d_n$ is as above.
\end{theorem}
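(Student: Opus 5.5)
We prove Theorem \ref{theorem:weighted limit} by following the scheme of the unweighted Theorem \ref{theorem:limit} (and of the classical proof in \cite{Werner94}), carrying the weight along as a local constant. The first step is to write the volume difference in polar-type coordinates over $\partial K$. For $x\in\partial K$ let $x_\delta$ be the point where the ray $x+tN_K(x)$, $t\ge0$, leaves $I_{R,\varphi}^\delta(K)$; for small $\delta$ this point exists by continuity and monotonicity of $t\mapsto \vol_n^\varphi([K,x+tN_K(x)]_R\setminus K)$. Then
\[
\vol_n\left(I_{R,\varphi}^\delta(K)\right)-\vol_n(K)=\int_{\partial K}\int_0^{\|x_\delta-x\|}\prod_{i=1}^{n-1}\bigl(1+t\kappa_i(K,x)\bigr)\,dt\,d\mu_K(x),
\]
where we use that $K\in\mathcal K_R^+$ is $C^2$. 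Since $\|x_\delta-x\|\to0$ uniformly, this equals $\int_{\partial K}\|x_\delta-x\|\,(1+o(1))\,d\mu_K(x)$. The aim is then a pointwise limit of $\|x_\delta-x\|/\delta^{2/(n+1)}$ together with a uniform bound that lets us apply dominated convergence.

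The key local computation is the shape of $[K,y]_R\setminus K$ for $y=x+tN_K(x)$ with $t$ small. Near $x$, the boundary of $K$ is the graph $f(z)=\tfrac12\sum\kappa_i z_i^2+o(|z|^2)$ over $H(x,N_K(x))$. The $R$-ball hull of $K\cup\{y\}$ is bounded by spherical pieces of radius $R$ that pass through $y$ and touch $\partial K$; locally these have curvature $1/R$. To second order, the region $[K,y]_R\setminus K$ is therefore the "cone" from $y$ over the cap of $K$ relative to an osculating comparison. Equivalently, after subtracting the quadratic form $\tfrac{1}{2R}|z|^2$, we get a classical illumination problem for the paraboloid with curvatures $\kappa_i-\tfrac1R$. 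The classical computation then gives
\[
\vol_n([K,y]_R\setminus K)=\frac{\vol_{n-1}(B_2^{n-1})}{n(n+1)}\,2^{\frac{n+1}{2}}\,t^{\frac{n+1}{2}}\prod_{i=1}^{n-1}\Bigl(\kappa_i-\frac1R\Bigr)^{-1/2}(1+o(1)).
\]
Since the region shrinks to $x$ and $\varphi$ is continuous, $\vol_n^\varphi$ of this set equals $\varphi(x)$ times the expression above, up to a factor $1+o(1)$. Setting this equal to $\delta$ and solving for $t$ yields
\[
\|x_\delta-x\|\,\delta^{-\frac{2}{n+1}}\to \frac12\Bigl(\frac{n(n+1)}{\vol_{n-1}(B^{n-1}_2)}\Bigr)^{\frac{2}{n+1}}\prod_{i=1}^{n-1}\Bigl(\kappa_i-\frac1R\Bigr)^{\frac1{n+1}}\varphi(x)^{-\frac{2}{n+1}},
\]
which is exactly $d_n$ times the integrand.

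To justify the "subtract $\tfrac1{2R}|z|^2$" reduction, we would sandwich $K$ locally between two $R$-ball-convex comparison bodies whose boundaries near $x$ are quadratic surfaces with curvatures $\kappa_i\pm\varepsilon$. For an exact quadric we can compute the $R$-hull explicitly, either by hand or by using monotonicity of $[\,\cdot\,,y]_R$ under inclusion together with an inclusion–exclusion on $K$, where the relevant part is just the cap near $x$. We also need that $[K,y]_R\setminus K$ lies in a small neighborhood of $x$, with diameter $O(\sqrt t)$. This follows from $\kappa_i>1/R$ uniformly: the supporting $R$-spheres through $y$ touch $\partial K$ only near $x$.

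The domination step is the main obstacle. We need a bound $\|x_\delta-x\|\le c\,\delta^{2/(n+1)}$ uniformly in $x$. Here compactness of $\partial K$ and $\min_i\kappa_i-1/R\ge c_0>0$ give a uniform lower bound $\vol_n([K,y]_R\setminus K)\ge c\,t^{(n+1)/2}$, obtained by comparing with an inscribed $R$-hull of a small ball cap. Combined with $\varphi\ge\min_{K+\rho B}\varphi>0$ on a neighborhood, this gives the bound, and dominated convergence then finishes the proof. Controlling the $R$-hull uniformly, rather than only pointwise, is the step we expect to require the most care.
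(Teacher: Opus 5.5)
Your proof is correct in outline but organized differently from the paper's.

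\textbf{The paper's route.} It measures the gap along rays from an interior origin, using the cone-volume formula (Lemma \ref{vol-diff}, which needs only convexity or star-shapedness). For domination it passes to the unweighted body $I_R^{\delta/m_\varphi}(K)\subseteq K^{\delta/m_\varphi}$ and uses the classical majorant $\gamma_n r_K(x)^{-\frac{n-1}{n+1}}$ of Lemma \ref{bounded}, which is integrable for every convex body. The pointwise limit comes from Lemma \ref{ellipsoid}: the touching points of the supporting $R$-spheres through $x^\delta$ are located on an ellipsoid with semi-axes $\beta_i$, and $f-g_k$ is integrated directly.

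\textbf{Your route.} You use normal rays $x+tN_K(x)$ with Jacobian $\prod_i(1+t\kappa_i)$. This makes $t$ exactly the height over the tangent plane, so you avoid the paper's comparison between the oblique point $x^\delta$ and the point above $x$. You aim for a uniform bound $t_\delta(x)\le c\,\delta^{\frac{2}{n+1}}$. You get the local asymptotics from the volume-preserving shear $(\tilde z,z_n)\mapsto(\tilde z,z_n-\frac{1}{2R}|\tilde z|^2)$. This flattens the relevant supporting $R$-spheres (whose centers have $|\tilde z_0|=O(\sqrt t)$) into hyperplanes up to negligible error. The problem then becomes classical illumination of a paraboloid with curvatures $\kappa_i-\frac1R$. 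It gives the same constant and explains where $\kappa_i-\frac1R$ comes from.

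\textbf{What each route costs.} Your route needs regularity. The Jacobian formula and the uniform bound require $K$ to be $C^2$, or at least to have a uniform inner rolling ball. The hypothesis $K\in\mathcal K_R^+$ does not literally provide this, whereas the paper's rolling-function majorant needs nothing of the kind.

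\textbf{On the domination step.} The step you flag as hardest is actually easy. $[K,y]_R$ is an intersection of balls, hence convex, so it contains $\mathrm{conv}(K\cup\{y\})$ (cf.\ (\ref{inclusion})). Any lower bound for the classical illuminated volume therefore transfers directly. Moreover, $\vol_n([K,y]_R\setminus K)\ge c\,t^{\frac{n+1}{2}}$ comes from an upper curvature bound (the inscribed ball), not from $\min_i\kappa_i-\frac1R\ge c_0$. The condition $\kappa_i>\frac1R$ matters only for locality of $[K,y]_R\setminus K$ and for the pointwise limit.

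Your handling of the weight is the same as the paper's: freeze $\varphi$ at $\varphi(x)$ by continuity, and use a positive lower bound $m_\varphi$ near $K$ for domination.
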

\vskip 3mm
\noindent
An extension of the classical affine surface area (\ref{asa})  to the setting of Lutwak's  $L_p$ Brunn-Minkowski theory was given by Lutwak. He introduced 
the $L_p$-affine surface areas \cite{Lutwak:1996}  (see also \cite{SW:2004}).  For a convex body $K$ in $\mathbb{R}^n$ and $p \in \mathbb{R}$, $p \neq -n$, they are defined as
\begin{equation*}
    as_{p}(K) = \int_{\partial K} \frac{\kappa(K, x)^\frac{p}{n+p} } {\langle x, N_K(x) \rangle ^ \frac{n(p-1)}{n+p}}  \, d \mu_K(x).
\end{equation*}
The case $p=1$ is the classical affine surface area (\ref{asa}). 
The $L_p$-affine surface areas are semicontinuous,  linear invariant valuations \cite{Ludwig, LR:2010, Lutwak:1996}, they satisfy affine isoperimetric inequalities \cite{Lutwak:1996,LYZ:2000, WY:2008} and  they have a Steiner formula associated with them \cite{TW:2019, TW:2023}.
\vskip 3mm
\noindent
An extension of $L_p$-affine surface area to  {\em relative $L_p$ surface area} $\Omega_p^R(K)$ in the setting of $R$-ball convexity was given in \cite{WernerYalikun} as follows.
\par
\noindent
Let $K$ be an $R$-ball convex body. Let $ - \infty \leq p \leq \infty$, $p\neq -n$. Then 
\begin{equation} \label {p-Rasa}
\Omega_p^R(K) = \int_{\partial K}\Big (\frac{\kappa(K, x)^\frac{1}{n+1}}{\langle x, N(x)\rangle}\Big )^\frac{n(p-1)}{n+p}  \prod _{i=1}^{n-1} \left( \kappa_i (K, x) -\frac{1}{R} \right)^\frac{1}{n+1} d \mu_K(x).
\end{equation}
Note that for $p=1$ we recover the relative surface area (\ref{Rasa}) and that for $R\to \infty$, we recover the ``classical" $L_p$ affine surface areas.
\vskip 2mm
\noindent
The relative $L_p$-surface areas can be obtained as a  corollary to Theorem \ref {theorem:weighted limit}.
Indeed, let $K$ be a convex body in $\mathbb{R}^n$ that is $C^2_+$.
For $p \in \mathbb{R}$, $p \neq -n$, we specify   $\varphi$
to $\varphi_p$  such that for $x \in \partial K$,
\begin{equation}\label{varphip}
 \varphi_p(x)^{-\frac{2}{n+1}}
    = \Big (\frac{\kappa(K, x)^\frac{1}{n+1}}{\langle x, N_K(x)\rangle}\Big )^\frac{n(p-1)}{n+p} 
\end{equation}
and extend $\varphi_p$  continuously to $U$.
The next corollary shows that  the  relative $L_p$  surface area is  a right-derivative at $\delta=0$.
\par
\begin{corollary}
Let  $K$ be a convex body in $\mathbb{R}^n$ that is $C^2_+$ and let $\varphi_p$ be as in (\ref{varphip}). Then
\[
\lim_{\delta\to 0} \frac{\vol_n (I_{R, \varphi_p}^{\delta}(K))- \vol_n(K)}{\delta^\frac{2}{n+1}} = d_n\, \Omega^R_p(K).
\]
\end{corollary}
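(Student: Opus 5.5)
The plan is to derive the corollary directly from Theorem \ref{theorem:weighted limit}, applied with the weight $\varphi=\varphi_p$.

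\textbf{Verifying the hypotheses.} Theorem \ref{theorem:weighted limit} needs $K\in\mathcal{K}_R^+$ and a continuous, integrable, strictly positive weight on an open set $U\supset K$.

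For the body, we read the corollary as concerning an $R$-ball convex body that is $C^2_+$ with all principal curvatures strictly greater than $\frac1R$, which is exactly the class $\mathcal{K}_R^+$. This is the standing setting for $\Omega_p^R$ in (\ref{p-Rasa}).

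For the weight, assume (after a translation, which leaves both sides invariant only up to the definition of $\Omega_p^R$) that $0\in\mathrm{int}\,K$. Then $\langle x,N_K(x)\rangle\ge c>0$ on $\partial K$. Since $K$ is $C^2_+$, the map $x\mapsto \kappa(K,x)$ is continuous and bounded between positive constants on $\partial K$. Hence the right-hand side of (\ref{varphip}) is continuous and positive on $\partial K$, and so is
\[
\varphi_p(x)=\Big(\frac{\kappa(K,x)^{\frac1{n+1}}}{\langle x,N_K(x)\rangle}\Big)^{-\frac{n(p-1)(n+1)}{2(n+p)}}
\]
for $p\neq -n$.

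\textbf{Extending the weight.} We extend $\varphi_p$ from the compact set $\partial K$ to $\mathbb{R}^n$ by Tietze's theorem. To keep the extension positive and integrable, we clamp it between $\frac12\min_{\partial K}\varphi_p$ and $2\max_{\partial K}\varphi_p$, and then restrict to a bounded open neighbourhood $U$ of $K$, for instance $U=K+B^n_2$. The result is continuous, positive and bounded on the bounded set $U$, hence integrable there.

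\textbf{Applying the theorem.} Theorem \ref{theorem:weighted limit} now gives
\[
\lim_{\delta\to0}\frac{\vol_n(I^\delta_{R,\varphi_p}(K))-\vol_n(K)}{\delta^{\frac2{n+1}}}
= d_n\int_{\partial K}\prod_{i=1}^{n-1}\Big(\kappa_i(K,x)-\frac1R\Big)^{\frac1{n+1}}\varphi_p(x)^{-\frac2{n+1}}\,d\mu_K(x).
\]
On $\partial K$ the extended weight coincides with the original $\varphi_p$, so we can substitute (\ref{varphip}). The integrand then becomes exactly that of (\ref{p-Rasa}), and the right-hand side equals $d_n\,\Omega_p^R(K)$.

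\textbf{Main obstacle.} The only delicate point is that Theorem \ref{theorem:weighted limit} must depend on $\varphi$ solely through its values on $\partial K$. This is visible from the statement of the theorem, so the particular choice of extension does not affect the limit. The remaining care is in making sure $\varphi_p$ is well defined and positive, which uses $0\in\mathrm{int}\,K$ and $p\neq -n$, and in the degenerate cases $p=\pm\infty$. Those cases are excluded here since $p\in\mathbb{R}$.
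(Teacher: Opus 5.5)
Your proposal is correct and follows the paper's route: the corollary is Theorem \ref{theorem:weighted limit} applied to a continuous positive extension of $\varphi_p$ to an open $U\supset K$, with (\ref{varphip}) turning the integrand into that of (\ref{p-Rasa}), and reading the hypothesis as $K\in\mathcal{K}_R^+$ is what the theorem requires. Two cosmetic points: $0\in\mathrm{int}\,K$ is a standing assumption needed for $\varphi_p$ to be defined, not something gained by translating, since both $\varphi_p$ and $\Omega_p^R$ depend on the origin; and $K+B^n_2$ is closed, so take $U=K+\mathrm{int}\,B^n_2$ instead.
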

\vskip 3mm
\noindent
It cannot happen for an $R$-ball convex body $K$ that one of the $\kappa_i=0$. 
In fact,  for an $R$-convex body $\kappa_i \geq \frac{1}{R}$ for all $i$. This holds as  at every boundary point of $K$ a ball with radius $R$ containing $K$  touches
$\partial K$  and therefore all principal radii of curvature are smaller than or equal to $R$.
It can happen for an $R$-ball convex body that one or all of the $\kappa_i(x) =\frac{1}{R}$.
If one or all of the $\kappa_i(x)=\frac{1}{R}$ almost everywhere on $\partial K$, then $as^R(K)=0$ and $\Omega_p^R(K)=0$.
For instance this is the case for {\em $R$-ball polyhedra},  i.e. the intersection of
finitely many $R$-balls, see e.g., \cite{BezdekLangiNazodi}. For  $R$-ball polyhedra all principal curvatures are  equal to $\frac{1}{R}$ a.e. 
In particular $as^R(R B^n_2)=0$ and $\Omega_p^R(R B^n_2)=0$.
\vskip 2mm
\noindent
Like the classical $L_p$ affine surface areas, the relative surface areas are valuations and have the same homogeneity properties.
This was shown in \cite{SWY, WernerYalikun}. We show now that, like for the classical surface area,
an isoperimetric inequality also holds for the relative surface area.
\vskip 2mm
\noindent
\begin{proposition}\label{inequality} 
Let $K$ be an $R$-ball convex body in $\mathbb {R}^2$. Let $r=\left(\frac{\vol_2
(K)}{\vol_2(B^2_2)}\right)^\frac{1}{2}$. Then
\begin{equation}\label{equation}
as^R(K) \leq as^R(r B^2_2),
\end{equation}
with equality iff $K= r B^2_2$.
\end{proposition}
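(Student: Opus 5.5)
We work with the support function $h$ of $K$ on $S^1$, parametrized by the angle $\theta$ of the outer normal, and the radius of curvature $\rho=h+h''$. We choose the origin at the Santaló point of $K$, so that $h>0$ and the Blaschke–Santaló inequality $\vol_2(K)\,\vol_2(K^\circ)\le \pi^2$ is available.

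\emph{Step 1 (reduction to an integral over normals).} Since $K$ is $R$-ball convex, $0\le \rho\le R$ a.e. Only the absolutely continuous part of the surface area measure contributes to $as^R$: corners carry no arc length, and arcs of radius exactly $R$ give a zero integrand. Because $ds=\rho\,d\theta$ on that part,
\[
as^R(K)=\int_0^{2\pi}\Big(\frac1\rho-\frac1R\Big)^{1/3}\rho\,d\theta=\int_0^{2\pi}\rho^{2/3}\Big(1-\frac{\rho}{R}\Big)^{1/3}d\theta .
\]
Justifying this representation for non-smooth $K$ uses the Alexandrov a.e. curvature, exactly as in the classical affine surface area. For the ball, $as^R(rB^2_2)=2\pi\, r^{2/3}(1-r/R)^{1/3}=2\pi r\,(1/r-1/R)^{1/3}$.

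\emph{Step 2 (Hölder as in the affine isoperimetric inequality).} Write the integrand as $(h\rho)^{2/3}\cdot h^{-2/3}(1-\rho/R)^{1/3}$ and apply Hölder with exponents $3/2$ and $3$:
\[
as^R(K)\le\Big(\int h\rho\,d\theta\Big)^{2/3}\Big(\int h^{-2}\big(1-\tfrac{\rho}{R}\big)d\theta\Big)^{1/3}
=(2\vol_2(K))^{2/3}\Big(\int h^{-2}\,d\theta-\frac1R\int \rho\, h^{-2}\,d\theta\Big)^{1/3}.
\]
Here $\int h\rho\,d\theta = 2\vol_2(K)$. For the last term, integrate by parts using $\rho=h+h''$:
\[
\int \rho h^{-2}\,d\theta=\int h^{-1}\,d\theta+2\int h'^2h^{-3}\,d\theta\ \ge\ \int h^{-1}\,d\theta .
\]
Hence
\[
as^R(K)^3\le 4\vol_2(K)^2\Big(X-\frac1R Y\Big),\qquad X=\int h^{-2}\,d\theta=2\vol_2(K^\circ),\quad Y=\int h^{-1}\,d\theta .
\]

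\emph{Step 3 (the main obstacle).} It remains to show
\[
X-\frac{Y}{R}\le 2\pi\Big(\frac1{r^2}-\frac1{rR}\Big),
\]
since the right-hand side times $4\vol_2(K)^2=4\pi^2r^4$ equals $as^R(rB^2_2)^3$. Blaschke–Santaló gives $X\le 2\pi^2/\vol_2(K)=2\pi/r^2$, but the $-Y/R$ term works against us, so $X$ and $Y$ must be controlled jointly. I would try the following two-variable argument.
\begin{itemize}
\item Cauchy–Schwarz gives $Y\le\sqrt{2\pi X}$, which is the wrong direction for a lower bound on $Y$.
\item Instead, set $s=\sqrt{X/2\pi}$ and aim to prove $Y\ge 2\pi s$ up to an error that is absorbed. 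Then $X-Y/R\le 2\pi(s^2-s/R)$.
\item The function $s\mapsto s^2-s/R$ is increasing for $s\ge 1/(2R)$, and $s\le 1/r$ by Santaló. Since $r\le R$, we have $1/r\ge 1/R\ge 1/(2R)$, so the bound $2\pi(1/r^2-1/(rR))$ would follow in the monotone range.
\end{itemize}
The lower bound $Y\ge 2\pi s$ fails in general by the Cauchy–Schwarz direction. So I would first try to exploit the discarded nonnegative term $2\int h'^2h^{-3}\,d\theta$ together with $\rho\le R$ (equivalently $h+h''\le R$) to compensate. If that fails, I would instead apply Hölder with the weight $(1-\rho/R)$ kept attached to $h\rho$, and use a Santaló-type inequality for the $R$-ball polar body.

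\emph{Step 4 (equality).} Equality in Hölder forces $h\rho$ to be proportional to $h^{-2}(1-\rho/R)$. Equality in the integration by parts forces $h'\equiv0$, and equality in Santaló forces an ellipse centered at the origin. Together these give $h\equiv r$, i.e. $K=rB^2_2$.
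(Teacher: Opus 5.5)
\paragraph{Verdict: there is a genuine gap in Step 3.} Step 3 is not just unfinished: the inequality you reduce to is false, because replacing $\int\rho h^{-2}\,d\theta$ by $Y$ in Step 2 throws away too much.

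Take an ellipse with semi-axes $a\neq b$, centered at the origin (its Santal\'o point), with $a^2/b\le R$ so that it is $R$-ball convex. Then $h(\theta)=(a^2\cos^2\theta+b^2\sin^2\theta)^{1/2}$, so $X=\int h^{-2}\,d\theta=2\pi/(ab)=2\pi/r^2$ exactly. Cauchy--Schwarz gives $Y<\sqrt{2\pi X}=2\pi/r$ strictly, since $h$ is not constant. Hence $X-Y/R>2\pi(1/r^2-1/(rR))$; for instance $a=2$, $b=1$, $R=10$ gives $X-Y/R\approx 2.710$ against $2.697$.

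So your bound $4\vol_2(K)^2(X-Y/R)$ exceeds $as^R(rB^2_2)^3$ for such ellipses, and no joint control of $X$ and $Y$ can rescue it. The equality analysis in Step 4, which extracts $h'\equiv 0$ from this chain, fails with it.

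\paragraph{The repair.} Do not integrate by parts. Keep $Z=\int\rho h^{-2}\,d\theta$ and bound it from below with the same H\"older splitting. With $L=\int\rho\,d\theta$ the perimeter,
\[
L=\int(h\rho)^{2/3}(\rho h^{-2})^{1/3}\,d\theta\le(2\vol_2(K))^{2/3}Z^{1/3},
\]
so $Z\ge L^3/(4\vol_2(K)^2)\ge 2\pi/r$ by the isoperimetric inequality $L^2\ge 4\pi\vol_2(K)$. Combined with Santal\'o, $X\le 2\pi/r^2$, this gives $X-Z/R\le 2\pi(1/r^2-1/(rR))$, which is exactly $as^R(K)^3\le as^R(rB^2_2)^3$.

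For the equality case: if $r<R$, equality forces equality in the isoperimetric inequality, hence a disc. If $r=R$, then $K$ lies in a translate of $RB^2_2$ of the same area and so equals it.

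The identities $\int h\rho\,d\theta=2\vol_2(K)$ and $\int\rho\,d\theta=L$ require the surface area measure $S_K$ to be absolutely continuous. This holds because a planar $R$-ball convex body is a Minkowski summand of $RB^2_2$, so $S_K\le R\,d\theta$; you should state this.

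\paragraph{Comparison with the paper.} Once repaired, your argument is a genuinely different route. The paper Steiner-symmetrizes, proves a pointwise inequality between the graph functions of $K$ and of its symmetral (via convexity of $t\mapsto t^3$ and of $t\mapsto(1+t^2)^{3/2}$), and then reaches the ball through iterated symmetrizations. That last step tacitly needs upper semicontinuity of $as^R$. Your H\"older--Santal\'o--isoperimetric argument is a single inequality and yields the equality case directly.
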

\vskip 2mm
\noindent
{\bf Remarks.}
\par
\noindent
(i) A Euclidean ball $r\, B^n_2$ is $R$-ball convex iff $r \leq R$. By assumption on $r$ and as $K$ is $R$-ball convex,   $r^2 = \frac{\vol_2(K)}{vol_2(B^2_2) }\leq \frac{\vol_2(R \, B^2_2)}{\vol_2(B^2_2)}=R^2$.
Thus it holds that  $r \, B^2_2$ is $R$-ball convex.
\vskip 3mm
\noindent
(ii) From the homogeneity property of  relative surface area shown in \cite{SWY} it follows  that  $as^R(r B^2_2)= r^\frac{2}{3} as^\frac{R}{r}( B^2_2)$. Therefore  (\ref{equation}) is equivalent to
$$
\frac{as^R(K)}{\vol_2(K)^\frac{1}{3}}  \leq \frac{as^\frac{R}{r}(B^2_2)}{\vol_2(B^2_2)^\frac{1}{3}},
$$
with equality iff $K= r B^2_2$.
\vskip 3mm
\noindent
The next proposition shows that the $R$-ball illumination body of an $R$-ball convex body is $\tilde{R}$-ball convex with $\tilde{R} \geq R$.
We show this in the plane in the smooth case. The higher dimension general result will be presented in a forthcoming work. 
\par
\noindent
We will use the following notation. For $\theta \in [0, 2 \pi]$, let $h_K(\theta)$ be the support function of $K$, written as a $2 \pi$-periodic function of the outer normal angle $\theta$
and let $\rho_K(\theta)$ be the radius of curvature, i.e., $\rho_K(\theta)=1/ \kappa(\theta, K)$.
Observe that a planar $C^2_+$ convex body is $R$-ball convex iff $\rho_K(\theta) \leq R$ for all $\theta$, see \cite{BezdekLangiNazodi26}.

\begin{proposition}\label{R-convex} 
Let $K$ be an $R$-ball convex body in $\mathbb {R}^2$ that is $C^2_+$. Then we have for $\delta$ small enough
\begin{equation*}
\rho_{I_R^{\delta}(K)}(\theta)= \rho_K(\theta) + d_2 \delta^\frac{2}{3} \left[ \left(\frac{1}{\rho_K(\theta)} -\frac{1}{R}\right)^\frac{1}{3} + \left(\left(\frac{1}{\rho_K(\theta)} -\frac{1}{R}\right)^\frac{1}{3}\right)^{\prime\prime}\right] + O(\delta^\frac{4}{3}),
\end{equation*}
where $d_2=\frac{3^\frac{2}{3}}{2}$.
Consequently $I_R^{\delta}(K)$ is $\tilde{R}$-ball convex with
\begin{equation*}
\tilde{R} = R + d_2 \delta^\frac{2}{3} \max_{\theta \in [0, 2\pi]} \left[ \left(\frac{1}{\rho_K(\theta)} -\frac{1}{R}\right)^\frac{1}{3} + \left(\left(\frac{1}{\rho_K(\theta)} -\frac{1}{R}\right)^\frac{1}{3}\right)^{\prime\prime}\right] + O(\delta^\frac{4}{3}).
\end{equation*}
\end{proposition}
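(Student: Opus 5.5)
We plan to compute the support function of $I_R^\delta(K)$ to second order in the planar $C^2_+$ setting and then use the planar identity $\rho_L(\theta)=h_L(\theta)+h_L''(\theta)$ for the radius of curvature of a $C^2$ body $L$.

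\textbf{Step 1: local normal form.} Fix a boundary point $x_0\in\partial K$ with outer normal angle $\theta$. Write $\partial K$ locally as a graph over its tangent line,
\[
f(t)=\tfrac{\kappa}{2}t^2+O(t^3), \qquad \kappa=1/\rho_K(\theta).
\]
Take a point $x$ at height $s$ above $x_0$ along the normal (more precisely, the point of $\partial I_R^\delta(K)$ with that normal). The $R$-hull $[K,x]_R$ is then bounded by the two circular arcs of radius $R$ through $x$ that are tangent to $\partial K$; these replace the tangent segments in the classical construction.

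In the model where $\partial K$ is the osculating circle of curvature $\kappa$ and the arcs have curvature $1/R$, the region $[K,x]_R\setminus K$ is, to leading order, a cap between two parabolas whose curvatures differ by $\kappa-1/R$. Computing it as in the classical illumination body case, but with $\kappa$ replaced by the relative curvature $\kappa-1/R$, gives
\[
\vol_2 = c\,(\kappa-1/R)^{-1/2}s^{3/2}\bigl(1+O(s)\bigr), \qquad c=\tfrac{4\sqrt2}{3}.
\]
Inverting,
\[
s=\Bigl(\tfrac{3\delta}{4\sqrt2}\Bigr)^{2/3}(\kappa-1/R)^{1/3}+O(\delta^{4/3}),
\]
and the constant works out to $\tfrac{3^{2/3}}{2}=d_2$, consistent with Theorem~\ref{theorem:limit}.

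\textbf{Step 2: support function expansion.} The next task is to convert Step 1 into
\[
h_{I_R^\delta(K)}(\theta)=h_K(\theta)+d_2\delta^{2/3}g(\theta)+O(\delta^{4/3}), \qquad g=(1/\rho_K-1/R)^{1/3}.
\]
The support point of $I_R^\delta(K)$ in direction $\theta$ is displaced from $x_0$ tangentially by $O(\delta^{2/3})$ as well as normally. The tangential shift only affects $h$ at second order, because the support function is stationary with respect to the boundary parametrization.

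\textbf{Step 3: differentiate in $\theta$.} To justify differentiating the expansion twice, the error term $O(\delta^{4/3})$ must hold in $C^2$ in $\theta$. I would argue that the defining function
\[
F(x,\delta)=\vol_2([K,x]_R\setminus K)-\delta,
\]
written in rescaled coordinates $s=\delta^{2/3}\sigma$, is smooth in $(\theta,\sigma,\delta^{1/3})$. This holds given enough smoothness of $K$ (we may need $C^4$; otherwise the statement is interpreted with $g''$ in the distributional sense). The implicit function theorem then gives a $\delta^{1/3}$-expansion of $h$ that is smooth in $\theta$.

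Symmetry of the cap kills the $\delta^{1/3}$-odd correction terms: the third-order asymmetry of $f$ contributes only at relative order $s$, which gives the $O(\delta^{4/3})$ remainder. Applying $\rho=h+h''$ then yields the stated formula for $\rho_{I_R^\delta(K)}$.

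\textbf{Step 4: ball convexity.} Since $\rho_K\le R$ everywhere,
\[
\rho_{I_R^\delta(K)}\le R+d_2\delta^{2/3}\max_\theta\,(g+g'')+O(\delta^{4/3}).
\]
This quantity is positive and finite for small $\delta$, so $I_R^\delta(K)$ is $C^2_+$ (using Proposition~\ref{convex} for convexity). By the criterion quoted above, $\rho\le\tilde R$ then gives $\tilde R$-ball convexity.

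\textbf{Main obstacle.} The hardest part is Step 3: obtaining uniform, twice-differentiable control of the remainder in $\theta$. The natural tool is a careful implicit-function argument in rescaled variables.
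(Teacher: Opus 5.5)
Your route is the paper's: expand $h_{I_R^\delta(K)}(\theta)=h_K(\theta)+d_2\delta^{2/3}g(\theta)+O(\delta^{4/3})$ from the local cap asymptotics, apply $\rho=h+h''$, and then use the planar criterion $\rho\le\tilde R$. Your Step~3 goes further than the paper: the implicit-function control of the remainder in $C^2$ in $\theta$, and the parity argument killing the $\delta^{1/3}$-odd correction, supply justification the paper omits, since Lemma~\ref{limit} only yields a limit and the paper differentiates the expansion termwise without comment. One slip to fix: the model cap area is $\frac{2\sqrt2}{3}(\kappa-1/R)^{-1/2}s^{3/2}$ (matching Lemma~\ref{ellipsoid} for $n=2$), not $\frac{4\sqrt2}{3}(\kappa-1/R)^{-1/2}s^{3/2}$, since with your constant the inversion gives $3^{2/3}2^{-5/3}$ rather than $d_2$; also note that Steps~1 and~3 tacitly require $\rho_K<R$ everywhere, because $t\mapsto t^{1/3}$ is not differentiable at $0$ and the cap asymptotics degenerate where $\rho_K=R$.
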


\vskip 4mm
\section{Proofs}
\vskip 4mm

\subsection{Proof of Proposition \ref{convex}}

The proof of Proposition \ref{convex} was communicated to us by F. Besau \cite{BeweisFlorian}.
\par
\noindent
Let $0<\delta<1$, and let $z\in \mathbb{R}^n\setminus K$, such that
$$\vol_n([z,K]_R\setminus K) = \delta.$$
We call $z$ the illuminating point of $K$. Define
$$V_K^R(z)=\vol_n([z,K]_R).$$
Note that $V_K^R(z)$ is continuous in the interior of its domain. Thus, it is enough to show that $V_K^R$ is convex. It is enough to look at all points $z$ such that $z= (1-\lambda)z' +\lambda z''$, where $\lambda\in(0,1)$ and $z',z''$ are such that $[z',z'']$ does not meet $K$. Let 
$$u = \frac{z''-z'}{||z''-z'||}\in S^{n-1}.$$ 
Let $\bar{K}=\proj_{u^{\perp}}(K)$. Note that
$$\bar{z}=\proj_{u^{\perp}}(z)=\proj_{u^{\perp}}(z')=\proj_{u^{\perp}}(z'').$$
Let $\bar{y}\in[\bar{z},\bar{K}]$. Then there exists $s_0(\bar{y}),s_1(\bar{y})$ such that 
$$(\bar{y}+\mathbb{R}u)\cap[z,K]_R=\bar{y}+[s_0(\bar{y}),s_1(\bar{y}))]u.$$
Then we have
$$V_K^R(z)=\int_{[\bar{z},\bar{K}]}\int_{s_0(\bar{y})}^{s_1(\bar{y})}dsd\bar{y}=\int_{[\bar{z},\bar{K}]}(s_1(\bar{y})-s_0(\bar{y}))d\bar{y}.$$
Note for any $y=\bar{y}+su\in[s_0(\bar{y}),s_1(\bar{y})]u$, we can choose $x\in [z,K]_R$ so that 
$$y= (1-\eta)z+\eta x$$
for some $\eta\in(0,1)$. So we have
$$y' = (1-\eta)z' + \eta x$$
$$y''= (1-\eta)z'' + \eta x,$$
where $y'\in [z',K]_R\cap(\bar{y}+\mathbb{R}u)=\bar{y}+[s_0'(\bar{y}),s_1'(\bar{y})]u,$
and $$y''\in [z'',K]_R\cap (\bar{y}+\mathbb{R}u)=\bar{y}+[s_0''(\bar{y}),s_1''(\bar{y})]u.$$
Hence we have
$$[s_0(\bar{y}),s_1(\bar{y})]\subseteq (1-\lambda)[s_0'(\bar{y}),s_1'(\bar{y})]+\lambda[s_0''(\bar{y}),s_1''(\bar{y})]=\left[(1-\lambda)s_0'(\bar{y})+\lambda s_0''(\bar{y}),(1-\lambda)s_1'(\bar{y})+\lambda s_1''(\bar{y})\right].$$
Hence,
$$s_1(\bar{y})-s_0(\bar{y}) \le (1-\lambda)s_1'(\bar{y})+\lambda s_1''(\bar{y})-(1-\lambda)s_0'(\bar{y})-\lambda s_0''(\bar{y})$$
$$= (1-\lambda)[s_1'(\bar{y})-s_0'(\bar{y})]+\lambda[s_1''(\bar{y})-s_0''(\bar{y})]$$
Therefore,
$$V_K^R(z)=\int_{[\bar{z},\bar{K}]}(s_1(\bar{y})-s_0(\bar{y}))d\bar{y}$$
$$\le (1-\lambda)\int_{[\bar{z},\bar{K}]}s_1'(\bar{y})-s_0'(\bar{y})d\bar{y} + \lambda \int_{[\bar{z},\bar{K}]}s_1''(\bar{y})-s_0''(\bar{y})d\bar{y}$$
$$=(1-\lambda)V_K^R(z')+\lambda V_K^R(z'').$$

\vskip 4mm
\subsection{Proof of Proposition \ref{inequality}}
We use Steiner symmetrization. In dimension $2$, the class of ball-convex bodies is invariant under Steiner symmetrization.
This is not the case in dimensions greater than or equal $3$. An example is given in \cite{ArtsteinFlorentin}.
\begin{proof}
\noindent
We first write the setting for dimension $n$ and then specify to $n=2$.
We follow, in part,  the proof in \cite{Hug}. We assume without loss of generality that $0 \in \text{int} (K)$.
Let $\xi \in S^{n-1}$, let $p_\xi(K)$ be the projection of $K$ to $\xi^\perp$, let $K_\xi$ be the relative interior of $p_\xi(K)$ and for $x \in K_\xi$ put
\begin{eqnarray*}
&&f^-(x) = \min\{ \lambda \in \mathbb{R}: x+\lambda \xi \in K\}\\
&&f^+(x) = \max\{ \lambda \in \mathbb{R}: x+\lambda \xi \in K\}.
\end{eqnarray*}
We also put 
$$
K^- = \text{graph} (f^-), \hskip 4mm K^+= \text{graph} (f^+).
$$
Then, with $\text{relbd} (p_\xi(K))$ denoting the relative boundary of $p_\xi(K)$, 
$$
\partial K = K^-  \cup K^+ \cup (\partial K \cap \text{relbd} (p_\xi(K))).
$$
As $\partial K \cap \text{relbd} (p_\xi(K))$ has measure $0$, we have for $x \in K_\xi$, $z^-=(x, f^-(x))$, $z^+=(x, f^+(x))$, 
\begin{eqnarray} \label{2Integrale}
as^R(K) &=& \int_{K^- } \prod_{i=1}^{n-1} \left(\kappa_i(K,z^-) -\frac {1}{R}\right)^\frac{1}{n+1} d \mu_K(z^-) \nonumber \\
&+&
 \int_{K^+ } \prod_{i=1}^{n-1} \left(\kappa_i(K,z^+) -\frac {1}{R}\right)^\frac{1}{n+1} d \mu_K(z^+).
\end{eqnarray}
Let
$
F^- : K_\xi \to \mathbb{R}^n, \hskip 4mm x \to x +f^-(x) \xi.
$
Then
\begin{eqnarray*}
\int_{K^- } \prod_{i=1}^{n-1} \left(\kappa_i(K,z^-) -\frac {1}{R}\right)^\frac{1}{n+1} d \mu_K(z^-) = 
\int_{K_u } \prod_{i=1}^{n-1} \left(\kappa_i(K,F^-(x)) -\frac {1}{R}\right)^\frac{1}{n+1} \left( 1+ \| \nabla f^-(x)\|^2\right)^\frac{1}{2} dx.
\end{eqnarray*}
We have for the Gauss curvature $\kappa$ (see e.g. \cite{LSW2017}),
$$\kappa(K,F^-(x)) = \prod_{i=1}^{n-1}\kappa_i(K,F^-(x))  = \frac{|\det \left[\nabla^2 f^-(x)\right]|}{(1 + \|\nabla f^-(x)\|^2)^\frac{n+1}{2}}
$$
The case  $n=2$.
\begin{eqnarray*}
\left(\kappa(K,F^-(x)) -\frac {1}{R}\right)^\frac{1}{3} &=&
\left(\frac{|(f^-)^{\prime \prime}|}{(1 + (f^{- \prime})^2)^\frac{3}{2}} - \frac{1}{R} \right)^\frac{1}{3}
=\frac{\left(|(f^-)^{\prime \prime}(x)| - (1 +  (f^{- \prime})^2)^\frac{3}{2}\frac{1}{R} \right)^\frac{1}{3}}
{(1 +  (f^{- \prime})^2)^\frac{1}{2}}\\
\end{eqnarray*}
We identify $K_\xi = \mathbb{R}$ and put $S_\xi(K) = S(K)$ where $S_\xi(K)$ is the Steiner symmetrization of $K$ at $\xi^\perp$. 
As repeated Steiner symmetrizations lead to the Euclidean ball $r\, B^2_2$, it is enough
to show that $as^R(K)  \leq as^R(S(K))$.
As   $g=f^-$ and $h=(-f^+)$ are concave, we thus need to show that  
\begin{eqnarray*} 
& \int_{\mathbb{R} } \left[-g^{\prime \prime}- \frac{1}{R} \left(1 +(g^{\prime})^2\right)^\frac{3}{2}
\right]^\frac{1}{3} + \left[-h^{\prime \prime} - \frac{1}{R} \left(1 + (h^{\prime})^2\right)^\frac{3}{2} \right]^\frac{1}{3} dx \\
&\leq
2^\frac{2}{3} \int_\mathbb{R}  \left[ -g^{\prime \prime}- h^{\prime \prime} -\frac{2}{R} \left( 1+\frac{1}{4}(g^\prime+h^\prime)^2\right)^\frac{3}{2}\right]^\frac{1}{3} dx.
\end{eqnarray*}
It is enough to show that point-wise, 
\begin{eqnarray}\label{Gleichung1}
&2^\frac{2}{3} \left[ -g^{\prime \prime}(x) - h^{\prime \prime}(x)  -\frac{2}{R} \left( 1+\frac{1}{4}(g^\prime(x) +h^\prime (x))^2 \right)^\frac{3}{2}\right]^\frac{1}{3} \geq  \nonumber\\
&\left[-g^{\prime \prime}(x)  - \frac{1}{R} \left(1 + (g^{\prime}(x))^2 \right)^\frac{3}{2}
\right]^\frac{1}{3} + \left[-h^{\prime \prime}(x) - \frac{1}{R} \left(1 + (h^{\prime}(x))^2\right)^\frac{3}{2} \right]^\frac{1}{3}.
\end{eqnarray}
We again omit writing the argument and put 
$$ \left[-g^{\prime \prime} - \frac{1}{R} \left(1 + (g^{\prime})^2\right)^\frac{3}{2}\right]^\frac{1}{3} =a,  \hskip 10mm 
 \left[-h^{\prime \prime} - \frac{1}{R} \left(1 + (h^{\prime})^2\right)^\frac{3}{2} \right]^\frac{1}{3}=b.$$
Then showing  (\ref{Gleichung1}) is equivalent to showing 
\begin{eqnarray*}\label{Gleichung2}
&2^\frac{2}{3} \left[ a^3 +b^3 +\frac{1}{R} \left((1 + ( g^{\prime})^2)^\frac{3}{2} + (1 +  (h^{\prime})^2)^\frac{3}{2}  - 2 \left( 1+\frac{1}{4}(g^\prime+h^\prime)^2\right)^\frac{3}{2}\right) \right]^\frac{1}{3} \geq  
a+b.
\end{eqnarray*}
We put $(g^\prime)^2 =u^2$ and $(h^\prime)^2 =v^2$ and observe that
$$
(1 + u^2)^\frac{3}{2} + (1 +  v^2)^\frac{3}{2}  - 2 \left( 1+\frac{1}{4}(u+v)^2\right)^\frac{3}{2} \geq 0,
$$
with equality iff $u=v$.
Thus,  to show  (\ref{Gleichung1}) it is enough to show that
$$
2^\frac{2}{3} \left[ a^3 +b^3 \right]^\frac{1}{3} \geq a+b
$$
which holds with equality if and only if $a=b$.
\vskip 2mm
\noindent
Now we treat the equality characterization. It is clear that we have equality when $K=r\, B^2_2$.
\par
\noindent
Assume now that equality holds. Then we have equality in every Steiner symmetrization. 
In particular we have with the above notations
\begin{eqnarray} \label{F-integral}
 \int_\mathbb{R} F_{g,h}  dx = 0, 
\end{eqnarray}
where we have put
\begin{eqnarray*}
&&\hskip -10mm F_{g,h} = 
2^\frac{2}{3}   \left[ -g^{\prime \prime}- h^{\prime \prime} -\frac{2}{R} \left( 1+\frac{1}{4}(g^\prime+h^\prime)^2\right)^\frac{3}{2}\right]^\frac{1}{3} \\
&& \hskip 10mm - 
  \left[-g^{\prime \prime} - \frac{1}{R} \left(1 +(g^{\prime})^2\right)^\frac{3}{2}
\right]^\frac{1}{3} - \left[-h^{\prime \prime} - \frac{1}{R} \left(1 + (h^{\prime})^2\right)^\frac{3}{2} \right]^\frac{1}{3}
\end{eqnarray*}
and $g$ and $h$ are as above.
As noted above, $F_{g,h}  \geq 0$ everywhere. Hence it follows from (\ref{F-integral}) that $F_{g,h} = 0$ almost everywhere. 
As $K$ is $R$-ball convex, $K$ is $C^2_+$ almost everywhere and hence the functions $g$ and $h$ are continuous. Therefore $F_{g,h} = 0$  everywhere. 
This means that $f^- = -f^+$ and $K$ is symmetric about $\xi^\perp$ and consequently symmetric about every hyperplane and thus a Euclidean ball.  
\end{proof}

\vskip 4mm
\subsection{Proof of Theorem \ref{theorem:limit}}

We need several more ingredients.  The first can be found in e.g., \cite{SWY}.
\noindent
\vskip 3mm
\begin {proposition} \label{integral}
For $1 \leq i \leq n$, let $c_i >0$. Then
$$
\int_{S^{n-1}} \frac{d\sigma(\xi)}
{\left(\sum_{i=1}^{n} c_i \xi_i^2\right)^\frac{n}{2}} = \frac{2 \pi^\frac{n}{2}}{\Gamma(\frac{n}{2})} \left[\prod_{i=1}^{n} c_i ^\frac{1}{2}\right]^{-1} = \sigma (S^{n-1}) \left[\prod_{i=1}^{n} c_i ^\frac{1}{2}\right]^{-1}.
$$
\end{proposition}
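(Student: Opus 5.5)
We prove the formula by passing to polar coordinates on the sphere and then diagonalizing the quadratic form with a linear change of variables.

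\textbf{Step 1: polar coordinates.} The integrand $(\sum c_i\xi_i^2)^{-n/2}$ is the restriction to $S^{n-1}$ of the function
\[
F(x)=\Big(\sum_{i=1}^n c_i x_i^2\Big)^{-n/2},
\]
which is homogeneous of degree $-n$. For such a function the sphere integral can be recovered from a volume computation. The plan is to consider the set
\[
E=\Big\{x\in\mathbb{R}^n:\ \sum_{i=1}^n c_i x_i^2\le 1\Big\}
\]
and compute $\vol_n(E)$ in two ways.

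\textbf{Step 2: volume of $E$ via polar coordinates.} Write $x=t\xi$ with $\xi\in S^{n-1}$ and $t\ge0$. Then $x\in E$ if and only if $t\le (\sum c_i\xi_i^2)^{-1/2}$. Hence
\[
\vol_n(E)=\int_{S^{n-1}}\int_0^{(\sum c_i\xi_i^2)^{-1/2}} t^{n-1}\,dt\,d\sigma(\xi)=\frac1n\int_{S^{n-1}}\frac{d\sigma(\xi)}{(\sum c_i\xi_i^2)^{n/2}}.
\]

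\textbf{Step 3: volume of $E$ via the linear map.} The linear map $T:y\mapsto (c_1^{-1/2}y_1,\dots,c_n^{-1/2}y_n)$ sends $B^n_2$ onto $E$. Therefore
\[
\vol_n(E)=\vol_n(B^n_2)\prod_{i=1}^n c_i^{-1/2}.
\]

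\textbf{Step 4: conclusion.} Equating the two expressions for $\vol_n(E)$ gives
\[
\int_{S^{n-1}}\frac{d\sigma(\xi)}{(\sum c_i\xi_i^2)^{n/2}}=n\,\vol_n(B^n_2)\prod_{i=1}^n c_i^{-1/2}.
\]
It remains to use the standard identity
\[
n\,\vol_n(B^n_2)=\sigma(S^{n-1})=\frac{2\pi^{n/2}}{\Gamma(n/2)}.
\]
This identity is itself the case $c_i\equiv1$ of Step 2 combined with the Gaussian integral $\int_{\mathbb{R}^n}e^{-\|x\|^2}\,dx=\pi^{n/2}$.

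There is no real obstacle in this argument. The only care needed is the bookkeeping of the normalization: $\sigma$ denotes the unnormalized surface measure on $S^{n-1}$, and the factor $1/n$ from the radial integral cancels against $n\,\vol_n(B^n_2)$. All integrals are finite because $c_i>0$ keeps the integrand bounded on the sphere.
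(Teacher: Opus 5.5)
Your proof is correct. You compute $\vol_n(E)$ for $E=\{x:\sum_{i=1}^n c_i x_i^2\le 1\}$ twice. First in polar coordinates, using the radial function $(\sum c_i\xi_i^2)^{-1/2}$. Then as the image of $B^n_2$ under the diagonal map with determinant $\prod c_i^{-1/2}$. Together with $n\,\vol_n(B^n_2)=\sigma(S^{n-1})=2\pi^{n/2}/\Gamma(n/2)$, this gives exactly the stated identity, with the normalization handled correctly.

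The paper does not prove this proposition at all; it quotes it from \cite{SWY}. Your argument is therefore a self-contained replacement for that citation rather than an alternative to a proof in the text.

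Your route also gives a bonus. Step 2 stays valid by Tonelli for nonnegative integrands even when the radial function takes the value $+\infty$ and the volume is infinite. So if one $c_i=0$ and the others are positive, $E$ is an elliptic cylinder of infinite volume and the sphere integral diverges. This recovers the remark made right after the proposition cleanly, without the somewhat loose $\varepsilon$-estimate the paper uses there.
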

\vskip 2mm
\noindent
We require in Proposition \ref{integral} that all $c_{i}>0$, $1\leq i\leq n$. If only one of the $c_{i}$
equals $0$ the integral equals $\infty$. Indeed, suppose that $c_{n}=0$, while the others are strictly greater than $0$. Then on a set of volume $d(n)\,\varepsilon^{n-1}$ the integrand $\left(\sum_{i=1}^{n} c_i \xi_i^2\right)^\frac{n}{2}$ is greater than $\frac{1}{\varepsilon^{n}}\left(\sum_{i=1}^{n} c_i\right)^\frac{n}{2}$. Thus the integral is unbounded.
\vskip 3mm
\noindent
The next lemma is also standard, see e.g., \cite{Werner94}.   
\begin {lemma} \label{vol-diff}
Let $K$ and $L$  be  convex bodies in $\mathbb{R}^n$ such that $0 \in \text{int} (K) \subset L$. Then
$$
\vol_n(L)-\vol_n(K) = \frac{1}{n} \int_{\partial K} \langle x, N_K(x) \rangle \left[\left(\frac{\|x_L\|}{\|x\|}\right)^n -1 \right] \, d \mu_K(x), 
$$
where $x\in\partial K$, $x_L=\{\alpha x:\alpha\geq 0 \}\cap \partial L$.
\end{lemma}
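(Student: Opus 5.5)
The plan is polar-coordinate integration over cones with vertex at the origin, i.e.\ the standard radial-function proof of the volume-difference formula.

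Since $0 \in \text{int}(K)$ and $K \subset L$, both bodies are star-shaped with respect to $0$. For every direction $\xi \in S^{n-1}$ the ray $\{\alpha\xi:\alpha\ge 0\}$ meets $\partial K$ in exactly one point $x$ and $\partial L$ in exactly one point $x_L$. Moreover $\|x_L\|\ge\|x\|$.

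\emph{Step 1: polar coordinates.} Write
$$\vol_n(L)=\frac1n\int_{S^{n-1}}\rho_L(\xi)^n\,d\sigma(\xi), \qquad \vol_n(K)=\frac1n\int_{S^{n-1}}\rho_K(\xi)^n\,d\sigma(\xi),$$
where $\rho_K$ and $\rho_L$ are the radial functions. Subtracting gives
$$\vol_n(L)-\vol_n(K)=\frac1n\int_{S^{n-1}}\rho_K(\xi)^n\left[\left(\frac{\rho_L(\xi)}{\rho_K(\xi)}\right)^n-1\right]d\sigma(\xi).$$

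\emph{Step 2: change of variables.} Pass from the sphere to $\partial K$ via the radial map $x\mapsto \xi=x/\|x\|$. This map is bi-Lipschitz because $K$ contains a ball around $0$ and is bounded. Its Jacobian, valid at $\mu_K$-a.e.\ $x$ (wherever $N_K(x)$ is unique, which holds a.e.\ by Rademacher's theorem), is
$$d\sigma(\xi)=\frac{\langle x,N_K(x)\rangle}{\|x\|^n}\,d\mu_K(x).$$
This is the usual cone-volume identity: the cone over a small surface patch $dS$ at $x$ has volume $\frac1n\langle x,N_K(x)\rangle\,dS$, and this must equal $\frac1n\|x\|^n\,d\sigma$.

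\emph{Step 3: substitute.} With $\rho_K(\xi)=\|x\|$ and $\rho_L(\xi)=\|x_L\|$, the factor $\rho_K^n=\|x\|^n$ cancels against the $\|x\|^{-n}$ in the Jacobian. This yields exactly the stated formula.

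The only point needing care is the Jacobian identity in Step 2 for nonsmooth $K$. I would justify it in one of two ways:
\begin{itemize}
\item apply the area formula for Lipschitz maps, using that the radial projection of $\partial K$ onto $S^{n-1}$ has tangential Jacobian $\langle x,N_K(x)\rangle/\|x\|^n$ at points of differentiability of $\partial K$;
\item alternatively, apply the divergence theorem to the vector field $y\mapsto y/\|y\|^{n}$ (which is divergence-free away from $0$) on the region bounded by $\partial K$ and a small sphere, obtaining the identity for arbitrary Borel subsets of $\partial K$.
\end{itemize}
Either route is standard.

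Integrability is not an issue: $\|x_L\|/\|x\|$ is bounded, because $L$ is bounded and $K$ contains a ball around the origin.
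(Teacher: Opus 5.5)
Your proposal is correct. The paper gives no proof of its own: it cites the lemma as standard (\cite{Werner94}). Your argument, polar coordinates combined with the cone-volume Jacobian $d\sigma(\xi)=\langle x,N_K(x)\rangle\|x\|^{-n}\,d\mu_K(x)$ and justified by the area formula or the divergence theorem, is the standard proof behind that citation. Because it only uses the radial function of $L$, it also yields the star-shaped version stated in Remark 1, which the paper relies on for the weighted illumination bodies.
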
 
\vskip 2mm
\begin{remark} 
The lemma also holds if $L$ is star shaped, e.g., \cite{WY:2008}.
\end{remark}
\vskip 3mm
\noindent
We can assume without loss of generality that $0 \in \text{int} (K)$. 
We put for $x\in\partial K$, 
\begin{equation}\label{xdelta}
x^\delta =\{\alpha x:\alpha\geq 0 \}\cap \partial (I_R^\delta(K)) \hskip 4mm \text{respectively} \hskip 4mm x^\delta =\{\alpha x:\alpha\geq 0 \}\cap \partial (I_{R, \varphi} ^\delta(K)),
 \end{equation}
depending if we deal with the $R$ -illumination body respectively the weighted  $R$ -illumination body.
We  apply the lemma for the  $R$-illumination bodies.  This means that 
\begin{equation}\label{proofstep1}
\lim_{\delta \to 0} \frac{\vol_n\left(I_R^\delta(K)\right)-\vol_n(K) } {\delta^\frac{2}{n+1}} = \frac{1}{n} \lim_{\delta \to 0} \int_{\partial K} \frac{\langle x, N_K(x) \rangle}{\delta^\frac{2}{n+1}} \left[\left(\frac{\|x^\delta\|}{\|x\|} \right)^n-1 \right] \, d \mu_K(x). 
\end{equation}
\vskip 3mm
\noindent
For  the next lemma, we recall the \emph{rolling function} $r_K:\partial K \to [0,\infty)$ of a convex body $K$, which  was introduced by McMullen in \cite{McMullen}, see also \cite{SW:1990}.
For $x \in \partial K$ with unique outer normal $N_K(x)$ it  is defined by 
\begin{equation*}
	r_K(x) = \max\{\rho: B^{n}_2 (x - \rho N_{K} (x), \rho ) \subset K\}, 
\end{equation*}
i.e., $r_K(x)$ is the maximal radius of a Euclidean ball inside $K$ that contains $x$.
If $N_K(x)$ is not unique, $r_K(x) =0$. By McMullen \cite{McMullen} (also \cite{SW:1990}) $r_K(x)>0$  almost everywhere on $\partial K$.
It was shown in \cite{SW:1990} that for all $0 \leq \alpha <1$, 
\begin{equation}\label{r}
\int_{\partial K} \frac{1}{r_K(x)^\alpha} d \mu_K(x) < \infty.
\end{equation}
\vskip 3mm
\noindent
The next lemma and its proof is the analog of Lemma 2 of \cite{Werner94}. \vskip 3mm
\begin{lemma} \label{bounded}
Let $K$ be an $R$-ball convex body in $\mathbb R^{n}$  that contains $0$ as an interior point.
Let $x \in \partial K$ such that $r_{K}(x)>0$. 
Then there is $\delta_0$ such that for all $\delta \leq \delta_0$,  
\begin{equation*}\label{bounded-1}
\frac{1}{n} \frac{\langle x, N_K(x) \rangle}{\delta^\frac{2}{n+1}} \left[\left(\frac{\|x^\delta\|}{\|x\|} \right)^n-1 \right]\leq \gamma_n \, r_{K}(x)^{-\frac{n-1}{n+1}},
\end{equation*}
where $\gamma_n$ depends on $n$ and $K$ only.
\end{lemma}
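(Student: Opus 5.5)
The plan follows the proof of Lemma 2 in \cite{Werner94}. The key reduction is the inclusion $[K,z]_R \supseteq [B,z]_R$, valid whenever $B\subseteq K$. This holds because any $R$-ball containing $K\cup\{z\}$ also contains $B\cup\{z\}$, so the intersection defining $[K,z]_R$ is over fewer balls. We also use $[B,z]_R\setminus B \subseteq [K,z]_R\setminus K$ up to the part inside $K$. Take $B=B^n_2(x-r N_K(x), r)$ with $r=r_K(x)$ (we may assume $r\le R$), which is the rolling ball at $x$.

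\textbf{Step 1 (reducing to a ball).} Write $x^\delta = (1+t)x$ and $\tau=\|x^\delta-x\|=t\|x\|$. Since $x^\delta\in\partial I_R^\delta(K)$, we have $\delta = \vol_n([K,x^\delta]_R\setminus K)$. Because $K\cap[B,x^\delta]_R$ is contained in $K$, the region $[B,x^\delta]_R\setminus K$ contains the cap of $[B,x^\delta]_R$ cut off by the tangent hyperplane $H(x,N_K(x))$. That cap lies outside $K$, since $K\subseteq H^-(x,N_K(x))$. Hence $\delta\ge \vol_n\big([B,x^\delta]_R\cap H^+(x,N_K(x))\big)$.

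\textbf{Step 2 (lower bound on the cap).} The point $x^\delta$ has height $h=\tau\langle x/\|x\|,N_K(x)\rangle$ above the hyperplane $H(x,N_K(x))$. Also $\langle x/\|x\|,N_K(x)\rangle\ge c_K>0$ because $0\in\mathrm{int}(K)$, so $h\ge c_K\tau$. The set $[B,x^\delta]_R$ contains the spindle convex hull of $x^\delta$ and the cap region, which in turn contains the convex-hull-like cone with apex $x^\delta$ over the $(n-1)$-disc $B\cap H(x-sN_K(x),N_K(x))$. Here $s$ is chosen of order $h$. The only price of the $R$-ball hull compared with the ordinary hull is that cone sides are replaced by arcs of radius $R\ge r$; for small $h$ this changes volumes by at most a factor $1/2$. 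By elementary geometry of the ball of radius $r$, the disc at depth $s\sim h$ has radius of order $\sqrt{rh}$, and the portion above $H(x,N_K(x))$ has base radius of order $\sqrt{r h}$. This gives
\[
\delta \ge c_n\, h\,(r h)^{\frac{n-1}{2}} = c_n\, r^{\frac{n-1}{2}} h^{\frac{n+1}{2}},
\]
and therefore $\tau\le C_{n,K}\, \delta^{\frac{2}{n+1}} r^{-\frac{n-1}{n+1}}$. For this step we need $h\le$ some multiple of $r$. We get it by taking $\delta_0$ small, depending on $x$ through $r$; this is allowed since $\delta_0$ may depend on $x$.

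\textbf{Step 3 (conclusion).} We have $(\|x^\delta\|/\|x\|)^n-1=(1+t)^n-1\le n(1+t)^{n-1}t$. Here $t=\tau/\|x\|\le \tau/\min_{K}\|y\|$ is bounded once $\delta_0$ is small. Together with $\langle x,N_K(x)\rangle\le \diam(K)$, this gives the asserted bound, with $\gamma_n$ depending on $n$ and on $K$ through $\diam K$, the inradius about $0$, and $c_K$.

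The main obstacle is Step 2: we must verify rigorously that the $R$-ball hull of the small ball $B$ and the point $x^\delta$ still contains a cap of volume comparable to the Euclidean cone, uniformly in $r\le R$. I would handle this in a $2$-plane through $x^\delta$ and the ball centre. There I would check that the short arc of radius $R$ joining $x^\delta$ to the tangency point on $\partial B$ deviates from the chord by $O(\mathrm{length}^2/R)$, which is $o(h)$. I would then rotate this picture to cover the cone.
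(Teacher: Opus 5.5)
Your proof is correct for the lemma as stated, but it takes a different route from the paper. The paper never examines the geometry of $[K,x^\delta]_R$. It uses the inclusion $I_R^\delta(K)\subset K^\delta$ from (\ref{inclusion}), so that $\|x^\delta\|\le\|\tilde x^\delta\|$ with $\tilde x^\delta\in\partial K^\delta$, and then quotes Lemma 2 of \cite{Werner94} for the classical illumination body. You instead reprove the estimate directly: the rolling ball $B$, the cap of $[B,x^\delta]_R$ above the tangent hyperplane, and the cone volume $c_n r^{\frac{n-1}{2}}h^{\frac{n+1}{2}}$. Your version is self-contained and makes the dependence of $\gamma_n$ on $K$ explicit, whereas the paper's is a two-line reduction resting on an external lemma.

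The ``main obstacle'' you identify does not actually exist. Every ball of radius $R$ is convex, so $[B,x^\delta]_R\supseteq\mathrm{conv}(B\cup\{x^\delta\})$, and your cone sits inside the $R$-hull with no loss. This is the same observation that gives (\ref{inclusion}), and with it your Step 2 is just the Euclidean estimate behind \cite{Werner94}. The arc comparison you propose is therefore unnecessary, and as stated it would not even give $o(h)$. With chord length of order $\sqrt{rh}$, the deviation is of order $rh/R$, which is comparable to $h$ when $r\approx R$.

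Letting $\delta_0$ depend on $x$ matches the statement. However, the dominated-convergence step in the proof of Theorem \ref{theorem:limit} really needs a $\delta_0$ independent of $x$. You can get this by treating $h>r$ separately, using the cone from $x^\delta$ over a ball $B^n_2(0,\rho)\subset K$. For $\delta$ below a threshold depending only on $K$, this gives $\delta\ge c_K h^n\ge c_K r^{\frac{n-1}{2}}h^{\frac{n+1}{2}}$.
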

\vskip 2mm
\begin{proof}
Let $x\in\partial K$ such that $r_K(x)>0$. Let $x^\delta$ be as in (\ref{xdelta}) and let $\tilde{x}^\delta= \{\alpha x:\alpha\geq 0 \}\cap \partial K^\delta$.
By (\ref{inclusion}) we have that $\|x^\delta\| \leq \|\tilde{x}^\delta\|$ and hence
$$
\frac{1}{n} \frac{\langle x, N_K(x) \rangle }{\delta^\frac{2}{n+1}} \left[\left(\frac{\|x^\delta\|}{\|x\|} \right)^n -1 \right]\leq \frac{1}{n} \frac{\langle x, N_K(x) \rangle }{\delta^\frac{2}{n+1}} \left[\left(\frac{\|\tilde{x}^\delta\|}{\|x\|}\right)^n -1 \right]\leq \gamma_n \, r_{K}(x)^{-\frac{n-1}{n+1}}.
$$
The last inequality follows from  Lemma 2 of  \cite{Werner94}. This finishes  the proof.    
    \end{proof}
\vskip 3mm
\noindent
By Lemma \ref{bounded} and Lebesgue Dominated Convergence Theorem, we can interchange integration and limit in (\ref{proofstep1}) and get 
\begin{equation}\label{proofstep2}
\lim_{\delta \to 0} \frac{\vol_n\left(I_R^\delta(K)\right)-\vol_n(K) } {\delta^\frac{2}{n+1}} = \frac{1}{n}  \int_{\partial K} \lim_{\delta \to 0} \frac{\langle x, N_K(x) \rangle}{\delta^\frac{2}{n+1}} \left[\left(\frac{\|x^\delta\|}{\|x\|}\right)^n -1 \right] \, d \mu_K(x). 
\end{equation}
As $x$ and $x^\delta$ are colinear, we get that $\|x^\delta\|= \|x\| + \|x^\delta-x\|$ and thus
\begin{eqnarray*}\label{proofstep3}
\frac{\langle \frac{x}{\|x\|}, N_K(x) \rangle}{\delta^\frac{2}{n+1}} \|x^\delta-x\| &\leq&\frac{1}{n} \frac{\langle x, N_K(x) \rangle}{\delta^\frac{2}{n+1}} \left[\left(\frac{\|x^\delta\|}{\|x\|}\right)^n -1 \right] \nonumber \\
& \leq &
\frac{\langle \frac{x}{\|x\|}, N_K(x) \rangle}{\delta^\frac{2}{n+1}} \|x^\delta-x\| \left(1+ \beta_n \frac{\|x^\delta-x\|}{\|x\|}\right),
\end{eqnarray*}
where $\beta_n$ is a constant. Thus 
\begin{eqnarray}\label{proofstep3}
&&\hskip -15mm  \int_{\partial K} \lim_{\delta \to 0} \frac{\langle \frac{x}{\|x\|}, N_K(x) \rangle}{\delta^\frac{2}{n+1}} \|x^\delta-x\| d \mu_K(x) \leq \lim_{\delta \to 0} \frac{\vol_n\left(I_R^\delta(K)\right)-\vol_n(K) } {\delta^\frac{2}{n+1}} \nonumber \\
 &&\hskip 10mm \leq \int_{\partial K} \lim_{\delta \to 0} \frac{\langle \frac{x}{\|x\|}, N_K(x) \rangle}{\delta^\frac{2}{n+1}} \|x^\delta-x\|  \left( 1 + \beta_n \frac{ \|x^\delta-x\| }{\|x\|}\right)d \mu_K(x).
\end{eqnarray}
\vskip 3mm
\noindent
We now compute the limit under the integral.
\begin {lemma} \label{limit} 
Let $K$ be an $R$-ball convex body in $\mathbb R^{n}$.
Let $x \in \partial K$ be such that $\kappa_i(x, K) > \frac{1
}{R}$ for all $i=1,\dots,n-1$. Then
$$
\lim_{\delta \to 0} \left\langle \frac{x}{\|x\|}, N_K(x) \right\rangle  \frac{ \|x-x^\delta\|}{\delta^\frac{2}{n+1}} =  
\frac{1}{2} \left [ \frac{n(n+1)}{\vol_{n-1}(B^{n-1}_2)}\right]^\frac{2}{n+1} \left(\prod_{i=1}^{n-1} \left(\kappa_i(x,K) -\frac{1}{R}\right)\right)^{\frac{1}{n+1}}.
$$
\end{lemma}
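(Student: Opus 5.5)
Our plan is to reduce the computation to a local model around $x$, in the spirit of Lemma 3 of \cite{Werner94}, but with the convex hull replaced by the $R$-ball hull. We place $x$ at the origin of a local frame with $N_K(x)=-e_n$, so that near $x$ the body $K$ lies above the graph of $f(y)=\frac12\sum_{i=1}^{n-1}\kappa_i y_i^2+o(|y|^2)$, $y\in\mathbb{R}^{n-1}$. This is possible at points where the generalized second derivatives exist; the hypothesis $\kappa_i>\frac1R$ means these are normal boundary points. We write $z=x^\delta$ and let $t=\langle z-x,N_K(x)\rangle=\langle \frac{x}{\|x\|},N_K(x)\rangle\|x^\delta-x\|$ be its height above the tangent hyperplane. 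The quantity in the lemma is exactly $t/\delta^{2/(n+1)}$. It therefore suffices to show
\[
\vol_n([K,z]_R\setminus K)=\frac{2^{\frac{n+1}{2}}\vol_{n-1}(B_2^{n-1})}{n(n+1)}\,\frac{t^{\frac{n+1}{2}}}{\prod_i(\kappa_i-\frac1R)^{1/2}}\,(1+o(1)),\qquad t\to0 .
\]
Setting this equal to $\delta$ and solving for $t$ gives $t=\frac12\bigl[\frac{n(n+1)}{\vol_{n-1}(B^{n-1}_2)}\bigr]^{2/(n+1)}\prod_i(\kappa_i-\frac1R)^{1/(n+1)}\delta^{2/(n+1)}(1+o(1))$, which is the claim. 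We first check $t\to0$ as $\delta\to0$. This holds because $I_R^\delta(K)\subset K^\delta\to K$.

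The key step is to describe $[K,z]_R$ near $x$. In the model case where $K$ is exactly the osculating quadric, we note that the $R$-ball hull of $K\cup\{z\}$ near $z$ is bounded by arcs of radius $R$ (spindle surfaces) joining $z$ to $\partial K$. To second order, these behave like paraboloids of curvature $\frac1R$ emanating from $z$. Concretely, the part of $[K,z]_R\setminus K$ is, up to $o$-errors, the region above $f$ and above the "downward cone" replaced by the surface $\{w:\ w_n = t-\text{(slope)}\cdot|y| + \frac1{2R}|y|^2\dots\}$. The cleanest way to handle this is a change of variables. Subtracting the quadratic $\frac1{2R}|y|^2$ from all height functions maps $R$-spheres through the region, to second order, to hyperplanes, and maps $f$ to $\frac12\sum(\kappa_i-\frac1R)y_i^2$. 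So the local picture becomes the classical convex hull of a point at height $t$ with the paraboloid $P$ of curvatures $\kappa_i-\frac1R$. This shear $(y,w_n)\mapsto(y,w_n-\frac{1}{2R}|y|^2)$ has Jacobian $1$, so volumes are preserved.

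For the classical hull of the point $(0,t)$ with $P$, the cap volume is a direct computation. After the linear change $y_i\mapsto y_i\sqrt{\kappa_i-1/R}$, it reduces to the cone-minus-paraboloid volume for $\frac12|y|^2$. The tangency set is the sphere $|y|=\sqrt{2t}$, and integrating gives the constant above. Proposition~\ref{integral} may be used to express the anisotropic factor $\prod(\kappa_i-\frac1R)^{-1/2}$ if we integrate in polar coordinates.

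The main obstacle is making the "second-order equivalence" rigorous. We must show that the true $[K,z]_R\setminus K$ is squeezed between the corresponding regions for the quadrics with curvatures $(1\pm\varepsilon)\kappa_i$. We must also show that the relevant part of the hull lies within a neighbourhood of $x$ of diameter $O(\sqrt t)$, which is where $\kappa_i>\frac1R$ is needed: the tangency set has size $\sqrt{2t/(\kappa_i-1/R)}$. Finally, we must control the error between $R$-spheres and the paraboloids $\frac1{2R}|y|^2$ on such a neighbourhood, which is of order $|y|^4\sim t^2=o(t)$. We would use monotonicity of $[\cdot,z]_R$ under inclusion: comparing $K$ with the $R$-ball convex sets obtained by intersecting the osculating-quadric approximations with large $R$-balls gives the upper and lower bounds. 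We then let $\varepsilon\to0$ after $t\to0$.
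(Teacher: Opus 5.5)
Your proposal is correct in outline, and its central computation takes a genuinely different route from the paper.

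\textbf{The paper's route.} Lemma~\ref{ellipsoid} works directly with $R$-spheres. It Taylor-expands the approximating ellipsoid and the balls $z^k+RB^n_2$ through $x^\delta$, and uses the tangency condition to get (\ref{zed}). It then shows that the contact points lie approximately on an $(n-1)$-ellipsoid $\mathcal U_k$ whose semi-axes involve $\frac{a_n}{a_i^2}-\frac1R$. Finally it integrates $f-g_k$ over $\mathcal U_k$ in polar coordinates via Proposition~\ref{integral}, sandwiches $K$ between $\mathcal E(\varepsilon^\pm)$, and compares with the point on the normal line.

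\textbf{Your route.} The volume-preserving shear $(y,w_n)\mapsto(y,w_n-\frac{1}{2R}|y|^2)$ flattens the relevant $R$-spheres to hyperplanes, up to $O(t^2)$ on the $O(\sqrt t)$-neighbourhood. The $R$-hull therefore becomes the classical convex hull of a point and a paraboloid with curvatures $\kappa_i-\frac1R$. The constant then comes from the classical cone-minus-paraboloid volume $\vol_{n-1}(B^{n-1}_2)(2t)^{\frac{n+1}{2}}/(n(n+1))$, which you have right.

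\textbf{What each buys.} Your approach makes the appearance of the relative curvatures transparent and avoids the contact-set bookkeeping. Your multiplicative errors $(1+o(1))$ are also what the limit actually requires, whereas the paper's additive $c\,\varepsilon$ errors are not. The paper's route stays closer to the ball geometry but is computationally heavier.

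When you write it out, add three things.

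First, $x^\delta$ lies on the ray through $0$, not on the normal line. So $z=(\tilde z,-t)$ with $|\tilde z|=t\tan\theta$. After your shear, put $c_i=\kappa_i-\frac1R$ and use the volume-preserving affine map
$(y,w)\mapsto\bigl(y-\tilde z,\,w-\sum_i c_i\tilde z_i y_i+\frac12\sum_i c_i\tilde z_i^2\bigr)$.
It fixes the paraboloid and moves $z$ onto the axis at depth $t+\frac12\sum_i\kappa_i\tilde z_i^2=t+O(t^2)$. So the offset is harmless, and this replaces the paper's claim that the illuminated volume is minimal above the normal.

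Second, localization needs that $R$-balls with centres near $x-RN_K(x)$ contain all of $K$, not just the local model. This follows because $\kappa_i>\frac1R$ forces $B^n_2(x-RN_K(x),R)$ to meet $K$ only at $x$. A second contact point $p$ would put the spindle $[x,p]_R$, hence an arc of that sphere through $x$, into $\partial K$, giving a normal curvature $\frac1R$ at $x$. By compactness, small perturbations of the centre then still contain $K$ outside any fixed neighbourhood of $x$.

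Third, fix the sign slip in your heuristic paragraph. With $N_K(x)=-e_n$ the point sits at $(0,-t)$, and the illuminated region lies below the graph of $f$, not above it.
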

\vskip 3mm
\noindent
As $x^\delta \to x$ as $\delta \to 0$,  it follows from (\ref{proofstep3}), Lemma \ref{limit} and Proposition \ref{integral}, that 
\begin{eqnarray}\label{proofstep4}
&&\lim_{\delta \to 0}\frac{\vol_n\left(I_R^\delta(K)\right)-\vol_n(K) } {\delta^\frac{2}{n+1}} \nonumber \\
&&=\frac{\left(n(n^{2}-1)\right)^\frac{2}{n+1} }{2} \int_{\partial K} 
\left[\int_{S^{n-2}} \frac{d\sigma(\xi)}
{\left (\sum_{i=1}^{n-1} \left(\kappa_i(x, K) -\frac{1}{R} \right) \xi_i^2\right)^\frac{n-1}{2}}\right]^{-\frac{2}{n+1}} d \mu_K(x) \nonumber \\
&=&\frac{\left(n(n^{2}-1)\right)^\frac{2}{n+1} }{2  \left(\sigma(S^{n-1})\right)^\frac{2}{n+1}} \int_{\partial K} \prod _{i=1}^{n-1} \left( \kappa_i (K, x) -\frac{1}{R} \right)^\frac{1}{n+1}d \mu_K(x).
\end{eqnarray}
This finishes the proof of Theorem \ref{theorem:limit}. It only remains to prove Lemma \ref{limit}.
\vskip 4mm
\noindent
\subsubsection{Proof of Lemma \ref{limit}}

For the proof of Lemma \ref{limit}, we need more ingredients (see e.g., \cite{SW4}).
\par
\noindent
Let $K$ be a convex body in $\mathbb{R}^n$ 
such that $0\in\partial K$ and such that the indicatrix of Dupin in $0$ is
an ellipsoid. For a convex body $K$ in $\mathcal {K}_R^+$ this holds everywhere on $\partial K$. 
We can assume that $N_{K}(0)=-e_{n}$.
Let  $\sum_{i=1}^{n-1}\frac{\xi_{i}^{2}}{b_{i}^{2}}=1$ be the equation of the indicatrix at $0$.
Then the principal Gauss-Kronecker curvatures are $b_{i}^{-2}$,
$i=1,\dots,n-1$ and the Gauss-Kronecker curvature of $K$ at $0$ is $\prod_{i=1}^{n-1}b_{i}^{-2}$.
Let $\mathcal E$ be  the  standard approximating ellipsoid, where
\begin{equation}\label{StandardEll1}
\mathcal E
=\left\{\xi\in\mathbb R^{n}\left|
\sum_{i=1}^{n-1}\frac{\xi_{i}^{2}}{b_{i}^{2}}+
\frac{\left(\xi_{n}-\left(\prod_{i=1}^{n-1}b_{i}\right)^{\frac{2}{n-1}}\right)^{2}}
{\left(\prod_{i=1}^{n-1}b_{i}\right)^{\frac{2}{n-1}}}
\leq\left(\prod_{i=1}^{n-1}b_{i}\right)^{\frac{2}{n-1}}
\right.\right\}.
\end{equation}
We put $b_{i}=\frac{a_{i}}{\sqrt{a_{n}}}$ and $a_{n}=\left(\prod_{i=1}^{n-1}b_{i}\right)^{\frac{2}{n-1}}$
and we get for \eqref{StandardEll1}
\begin{equation}\label{StandardEll2}
\mathcal E
=\left\{\xi\in\mathbb R^{n}\left|
\sum_{i=1}^{n-1}\frac{a_{n}}{a_{i}^{2}}\xi_{i}^{2}+
\frac{\left(\xi_{n}-a_{n}\right)^{2}}
{a_{n}}
\leq a_{n}
\right.\right\}
\end{equation}
and in particular for $1 \leq i \leq n-1$
\begin{equation}\label{pc}
\kappa_i(x, K) = \frac{a_n}{a_i^2}.
\end{equation}
We write
$$
\xi_{n}=a_{n}-a_{n}\sqrt{1-\sum_{i=1}^{n-1}\frac{1}{a_{i}^{2}}\xi_{i}^{2}}.
$$
We will need  two further ellipsoids $\mathcal{E} (\varepsilon^-)$
and $\mathcal{E} (\varepsilon^+)$, one  slightly smaller than $\mathcal E$
and the other slightly bigger.
\begin{equation}\label{StandardEll4}
\mathcal E(\varepsilon^-)
=\left\{\xi\in\mathbb R^{n}\left|
\sum_{i=1}^{n-1}\frac{a_{n}}{(1-\varepsilon)^{2}a_{i}^{2}}\xi_{i}^{2}+
\frac{\left(\xi_{n}-a_{n}\right)^{2}}
{a_{n}}
\leq a_{n}
\right.\right\}.
\end{equation}
\begin{equation}\label{StandardEll5}
\mathcal E(\varepsilon^+)
=\left\{\xi\in\mathbb R^{n}\left|
\sum_{i=1}^{n-1}\frac{a_{n}}{(1+\varepsilon)^{2}a_{i}^{2}}\xi_{i}^{2}+
\frac{\left(\xi_{n}-a_{n}\right)^{2}}
{a_{n}}
\leq a_{n}
\right.\right\}.
\end{equation}
\vskip4mm
\noindent
We will first prove a lemma for ellipsoids.
\begin{lemma} \label{ellipsoid}
Let $\mathcal {E}$ be an $R$ ball convex ellipsoid  with equation $\sum_{i=1}^{n-1}\frac{x_i^2}{a_i^2} + \frac{(x_n-a_n)^2}{a_n^2}=1$.
Let $\varepsilon >0$ be given.  Let   $x^\delta=(x_1^\delta, \dots, x_{n-1}^\delta, -x_n^\delta) \in \partial(I_{R}^\delta (\mathcal{E}))$, with $x_n^\delta >0$.
Then we have for  small enough $\delta$,    
with constants $c$ and $d$, 
\begin{eqnarray*}
&&\hskip -10 mm \frac{2^\frac{n+1}{2} \vol_{n-1}(B^{n-1}_2)}{n(n+1)} \, \frac{(\|x^\delta\| \cos \theta)^\frac{n+1}{2}}
{\left(\prod_{i=1}^{n-1} \left(\frac{a_n}{a_i^2} -\frac{1}{R}\right)\right)^\frac{1}{2}} - d \, \varepsilon \leq 
\vol_n([x^\delta, \mathcal{E}]_R) - \vol_n(\mathcal{E}) \leq  \\
&&\hskip 30mm\frac{2^\frac{n+1}{2} \vol_{n-1}(B^{n-1}_2)}{n(n+1)} \, \frac{(\|x^\delta\| \cos \theta)^\frac{n+1}{2}}
{\left(\prod_{i=1}^{n-1} \left(\frac{a_n}{a_i^2} -\frac{1}{R}\right)\right)^\frac{1}{2}} + c \, \varepsilon,
\end{eqnarray*} 
where $\theta$ is the angle between $-e_n$ and $x^\delta$.
\par
\noindent
In particular, for $x^\delta=(0, \dots, 0 , -x_n^\delta)$, with $x_n^\delta >0$, we get
\begin{eqnarray*}
&&\hskip -10 mm \frac{2^\frac{n+1}{2} \vol_{n-1}(B^{n-1}_2)}{n(n+1)} \, \frac{(x_n^\delta)^\frac{n+1}{2}}
{\left(\prod_{i=1}^{n-1} \left(\frac{a_n}{a_i^2} -\frac{1}{R}\right)\right)^\frac{1}{2}} - d \, \varepsilon \leq 
\vol_n([x^\delta, \mathcal{E}]_R) - \vol_n(\mathcal{E}) \leq  \\
&&\hskip 30mm\frac{2^\frac{n+1}{2} \vol_{n-1}(B^{n-1}_2)}{n(n+1)} \, \frac{(x_n^\delta)^\frac{n+1}{2}}
{\left(\prod_{i=1}^{n-1} \left(\frac{a_n}{a_i^2} -\frac{1}{R}\right)\right)^\frac{1}{2}} + c \, \varepsilon.
\end{eqnarray*} 
\end{lemma}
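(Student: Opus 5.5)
To prove Lemma \ref{ellipsoid}, I would compute the volume of $[x^\delta,\mathcal{E}]_R\setminus\mathcal{E}$ to leading order by reducing it to a paraboloid computation near the origin. The guiding heuristic is that locally near $0$ the boundary of $\mathcal{E}$ is $\xi_n\approx\frac12\sum_{i<n}\frac{a_n}{a_i^2}\xi_i^2$. The $R$-ball convex hull $[x^\delta,\mathcal{E}]_R$ near $0$ is bounded by pieces of spheres of radius $R$ through $x^\delta$ that are tangent to $\partial\mathcal{E}$. Each such sphere has, to second order, the equation $\xi_n\approx \ell(\xi)+\frac{1}{2R}\|\xi'\|^2$ with $\ell$ affine. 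It is therefore convenient to subtract the quadratic $\frac{1}{2R}\|\xi'\|^2$ from everything. After this subtraction the ellipsoid becomes, to second order, the paraboloid $\xi_n=\frac12\sum_i(\frac{a_n}{a_i^2}-\frac1R)\xi_i^2$, and the spheres become hyperplanes. Under this shear, which preserves volume since it moves points only in the $e_n$-direction, the $R$-hull therefore becomes the ordinary convex hull of the transformed point and the transformed paraboloid. This is where the hypothesis $\kappa_i>\frac1R$ enters: it makes the transformed quadratic form positive definite.

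The steps in order are as follows.

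(1) Since $\delta$ is small, $x^\delta$ is close to $0$, and $[x^\delta,\mathcal{E}]_R\setminus\mathcal{E}$ is contained in a small cylinder around $0$ of radius $\eta\to0$. Inside this cylinder I would sandwich $\partial\mathcal{E}$ between the graphs $\xi_n=\frac12\sum_i\frac{a_n}{(1\pm\varepsilon)^2a_i^2}\xi_i^2$; this is the role of $\mathcal E(\varepsilon^\pm)$. In the same way I would sandwich the bounding $R$-spheres between $\ell(\xi)+\frac{1\pm\varepsilon}{2R}\|\xi'\|^2$.

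(2) I would apply the map $\Phi(\xi',\xi_n)=(\xi',\xi_n-\frac1{2R}\|\xi'\|^2)$. It has Jacobian $1$, and it carries the region between the sphere cap and $\partial\mathcal{E}$ into a region lying between the convex hull of a point and a quadratic graph of curvatures $\frac{a_n}{a_i^2}-\frac1R$ (up to factors $1\pm c\varepsilon$). This turns the monotonicity of the $R$-hull in the body into monotonicity of the classical convex hull.

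(3) I would use the classical computation from \cite{Werner94}: for the paraboloid $\xi_n=\frac12\sum_i k_i\xi_i^2$ and the point $(0,\dots,0,-t)$, the volume of the convex hull minus the paraboloid body equals $\frac{2^{\frac{n+1}{2}}\vol_{n-1}(B^{n-1}_2)}{n(n+1)}\,t^{\frac{n+1}{2}}\big(\prod k_i\big)^{-1/2}$. This is obtained by integrating over the shadow-boundary ellipsoid $\sum k_i\xi_i^2\le 2t$ and subtracting the cone volume from the paraboloid segment. For a tilted point $x^\delta$, I would replace $t$ by the height of $x^\delta$ below the tangent hyperplane, $\|x^\delta\|\cos\theta$. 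This uses the affine invariance of the paraboloid under shears $\xi\mapsto(\xi'+v,\ \xi_n+\langle \nabla q(v),\xi'\rangle+q(v))$: the point then sits directly below a shifted vertex, and the relevant quantity is its vertical distance to the tangent plane. That distance is $\|x^\delta\|\cos\theta$ up to lower-order errors.

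(4) Finally, the $\pm c\varepsilon$ errors arise from the factors $(1\pm\varepsilon)$ in the curvatures, together with the error of the quadratic approximation inside the $\eta$-cylinder. The latter is absorbed because the whole region has size $O(\delta)$, so all errors are relative errors bounded by $O(\varepsilon)$ times the main term.

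I expect the main obstacle to be step (2), the rigorous justification that $\Phi$ maps the $R$-hull to (approximately) the convex hull. The difficulty is that spheres are only approximately quadratic graphs, and the family of $R$-balls containing $\mathcal{E}\cup\{x^\delta\}$ must be controlled uniformly. I would handle this with two inclusions. First, $[x^\delta,\mathcal{E}]_R$ is contained in the intersection of the specific balls tangent to $\mathcal E(\varepsilon^+)$ through $x^\delta$; after applying $\Phi$ these become near-halfspaces containing the convex hull for the outer bound. Second, for the inner bound I would use that every $R$-ball containing $\mathcal E(\varepsilon^-)$ and $x^\delta$ contains, after applying $\Phi$, the convex hull of the transformed $\mathcal E(\varepsilon^-)$-piece and the point, restricted to the $\eta$-cylinder.
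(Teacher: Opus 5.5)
Your proposal is correct in outline, but it follows a genuinely different route from the paper.

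\textbf{How the paper argues.} The paper stays in the original coordinates. It expands $\partial\mathcal E$ as $f$ and the lower caps of the $R$-spheres through $x^\delta$ as $g_k$. It solves $\nabla f=\nabla g_k$, $f=g_k$, which places the contact points approximately on an $(n-1)$-dimensional ellipsoid $\mathcal U_k$ centred at $\tilde x^\delta$. It then integrates $f-g_k$ over $\mathcal U_k(\varepsilon^\pm)$ in polar coordinates and evaluates the angular integral with Proposition~\ref{integral}. Finally it replaces $x_n^\delta+\frac12\sum_i\frac{a_n}{a_i^2}(x_i^\delta)^2$ by $x_n^\delta$, using $|x_i^\delta|\le c\,x_n^\delta$.

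\textbf{What your shear buys.} Your volume-preserving shear $\Phi$ shows directly why the curvatures drop by $\frac1R$, and it lets you reuse the classical cone-minus-segment computation. It handles the tilted point exactly through the affine self-maps of the paraboloid, giving the same height $t=x_n^\delta+\frac12\sum_i\frac{a_n}{a_i^2}(x_i^\delta)^2$ as the paper. It also yields errors of the form $(1\pm c\varepsilon)$ times the main term. That relative form is what the proof of Lemma~\ref{limit} actually uses, and the additive $\pm c\varepsilon$ of the statement follows from it.

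\textbf{The step you flag is easier than you expect.} Take any $R$-ball $B$ and set $y=\xi'-z'$. The lower boundary of $\Phi(B)$ has Hessian $\frac{I}{\sqrt{R^2-\|y\|^2}}+\frac{yy^{T}}{(R^2-\|y\|^2)^{3/2}}-\frac{I}{R}\succeq 0$, and its upper boundary is concave, so $\Phi(B)$ is convex.
\begin{itemize}
\item Inner bound: $\Phi([x^\delta,\mathcal E]_R)\supseteq\mathrm{conv}(\Phi(\mathcal E)\cup\{\Phi(x^\delta)\})$ holds exactly.
\item Outer bound: use the balls $B_u$ tangent to $\mathcal E$ itself at $u$. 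These contain $\mathcal E$ by Blaschke's rolling theorem, which is safer than using $\mathcal E(\varepsilon^+)$, whose $R$-ball convexity is not guaranteed. Each satisfies $\Phi(B_u)\subseteq\{\xi_n\ge T_u(\xi')\}$, where $T_u$ is the common tangent plane at $\Phi(u)$. The condition $x^\delta\in B_u$ differs from ``$\Phi(x^\delta)$ lies above $T_u$'' only by lowering the apex by $O(t^2)$, which is a relative error $O(t)$.
\end{itemize}

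\textbf{Two minor corrections.}
\begin{itemize}
\item The sphere sandwich in step (1) must be centred at $z'$, namely $R-\sqrt{R^2-\|\xi'-z'\|^2}=\frac{\|\xi'-z'\|^2}{2R}+O(t^2)$. The form $\frac{1\pm\varepsilon}{2R}\|\xi'\|^2$ fails near $\xi'=0$ when $z'\neq0$.
\item Replacing $t$ by $\|x^\delta\|\cos\theta=x_n^\delta$ requires $\|\tilde x^\delta\|^2=o(x_n^\delta)$, i.e.\ a nontangential approach. The paper makes the same assumption tacitly.
\end{itemize}
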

\vskip 3mm
\begin{proof}
Without loss of generality we can assume that  $R>a_n\ge a_{n-1}\ge...\ge a_1$. 
Write $\tilde {x}= (x_1,x_2,...,x_{n-1})$ for a vector in $\mathbb{R}^{n-1}$ and 
$$
f(\tilde {x})=f(x_1,x_2,...,x_{n-1}) =a_n-a_n\left(1-\sum_{i=1}^{n-1}\frac{x_i^2}{a_i^2}\right)^{1/2}
$$
for  the equation of  the ellipsoid.  
For $k \in \mathbb{R}_+$, let $z^k=(z_1^k, z_2^k,...,z_{n-1}^k, z_n^k)$, $z_n^k \geq 0$,  and let $z^k+R\, B_2^n$ be the Euclidean  ball with center $z^k$ and radius $R$ 
such that $x^\delta \in \partial (z^k+R\, B_2^n)$.
Then this ball has equation 
$$
g_k(\tilde {x})=g_k(x_1,...,x_{n-1})=z^k_n-R\left(1-\sum_{i=1}^{n-1}\frac{(x_i-z^k_i)^2}{R^2}\right)^{1/2}
$$
with $z^k_n = R\left(1-\sum_{i=1}^{n-1}\frac{(x_i^\delta-z^k_i)^2}{R^2}\right)^{1/2}-x_n^\delta$. 
For $\delta$ small, the coordinates of $\tilde{x} = (x_1, \dots, x_{n-1})$ and $\tilde{z}^k = (z^k_1, \dots,  z^k_{n-1})$ are small  and we expand $f$ and $g$ around $0$
$$
f(\tilde {x})= \frac{1}{2} \sum_{i=1}^{n-1}\frac{a_n}{a_i^2} \, x_i^2 + O(\|\tilde{x}\|^4)
$$
and
$$
z^k_n = R- \frac{1}{2R} \sum_{i=1}^{n-1}(x_i^\delta-z^k_i)^2 -x_n^\delta +  O((\|\tilde{x}\|^2 + \|\tilde{z}^k\|^2)^2)
$$
and thus 
$$
g_k(\tilde {x})= -x_n^\delta + \frac{1}{2R } \sum_{i=1}^{n-1} x_i^2-{x_i^\delta} ^2- 2 \, z^k_i (x_i-x_i^\delta)  + O((\|\tilde{x}\|^2 + \|\tilde{z}^k\|^2)^2).
$$
For the point $u^k=(u_1^k, \dots, u_n^k)= (\tilde{u}^k, u_n^k)$ where $\mathcal{E}$ and $ z^k+R\, B^n_2$ touch, we have that $\nabla f (\tilde{u}^k) = \nabla g(\tilde{u}^k)$ and thus we get for all $1 \leq i \leq n-1$,
\begin{equation}\label{zed}
z^k_i= - R \, u^k_i \, \left(\frac{a_n}{a_i^2} -\frac{1}{R}\right) + O\left((\|\tilde{x}\|^2 + \|\tilde{z}^k\|^2)  \right).
\end{equation}
As $f(\tilde{u}^k)= g_k(\tilde{u}^k)$, we get
\begin{equation*} \label{ellipseU1}
2 x_n^\delta  + \sum _{i=1}^{n-1}   \frac{a_n}{a_i^2} {x_i^\delta}^2=  \sum _{i=1}^{n-1} \left( \frac{a_n}{a_i^2} -\frac{1}{R}\right) (u_i^k -x_i^\delta)^2 + O\left((\|\tilde{x}\|^2 + \|\tilde{z}^k\|^2)^2\right),
\end{equation*}
or, equivalently,
\begin{equation} \label{ellipseU1}
1=  \sum _{i=1}^{n-1} \left( \frac{a_n}{a_i^2} -\frac{1}{R}\right) \frac{ (u_i^k -x_i^\delta)^2}{2 x_n^\delta  + \sum _{i=1}^{n-1}   \frac{a_n}{a_i^2} {x_i^\delta}^2} + O\left((\|\tilde{x}\|^2 + \|\tilde{z}^k\|^2)^2\right).
\end{equation}
Note  that $2 x_n^\delta  + \sum _{i=1}^{n-1}   \frac{a_n}{a_i^2} {x_i^\delta}^2 >0$ as $x^\delta \notin \mathcal {E}$.
We put 
$$
\beta_i= \left(\frac{2 x_n^\delta  + \sum _{i=1}^{n-1}   \frac{a_n}{a_i^2} {x_i^\delta}^2}{\frac{a_n}{a_i^2} -\frac{1}{R}} \right)^\frac{1}{2}
$$ 
and  consider the $(n-1)$-dimensional ellipsoid $\mathcal{U}_k$ with equation
\begin{equation*} \label{ellipseU2}
\sum _{i=1}^{n-1} \frac{(u_i^k- x_i^\delta)^2}{\beta_i^2} = 1.
\end{equation*}
Then (\ref{ellipseU1}) says that up to a small error the points where $\mathcal{E}$ and $z^k+R\, B^n_2$ touch lie on the boundary of the ellipsoid $\mathcal {U}_k$.
Let $\varepsilon >0$ be given. We choose $\delta$ so small that for the centers $z^k$ and for $x^\delta$ we have that 
\begin{equation}\label{x-estimate}
\max_{z^k} \|\tilde{z}^k\| ^2< \varepsilon \hskip 5mm \text{and} \hskip 5mm 
\max_x \|\tilde{x}^\delta\|^2 < \varepsilon.
\end{equation}
We define $\mathcal {U}_k(\varepsilon^+)$, resp.   $\mathcal {U}_k(\varepsilon^-)$ to be the ellipsoids with equations
$$
\sum _{i=1}^{n-1} \frac{(u_i ^k - x_i^\delta)^2}{(1+\varepsilon)^2 \, \beta_i^2} = 1, \hskip 5mm  \text{resp.} \hskip 5mm \sum _{i=1}^{n-1} \frac{(u_i ^k - x_i^\delta)^2}{(1-\varepsilon)^2 \, \beta_i^2} = 1.
$$
We shift by $x^\delta$ to get new coordinates $\tilde{w} = \tilde{x} - \tilde{x}^\delta$ and thus
$$
f(\tilde {w})= \frac{1}{2} \sum_{i=1}^{n-1}\frac{a_n}{a_i^2} \, (w_i +x_i^\delta)^2 + c_1\, \varepsilon
$$
and, also using (\ref{zed}), 
$$
g_k(\tilde {w})= -x_n^\delta + \frac{1}{2R } \sum_{i=1}^{n-1} w_i^2 + 2\, R  \, u^k_i \left( \frac{a_n}{a_i^2} -\frac{1}{R}\right)  +  c_2\, \varepsilon.
$$
Next we integrate over $\mathcal {U}_k(\varepsilon^+) - x^\delta$ resp., $\mathcal {U}_k(\varepsilon^-) - x^\delta$.
We pass to polar coordinates $(\xi, l(\xi)) \in S^{n-2} \times [0, \infty)$. For $\xi \in S^{n-2}$, we put 
$$
l_{0}^+(\xi)=l_{0,k}^+(\xi) =\left(\sum_{i=1}^{n-1} \frac{{\xi_i}^2}{(1+\varepsilon)^2 \, \beta_i^2}\right)^{-\frac{1}{2}},  \hskip 10mm l_{0}^+(\xi)=l_{0,k}^-(\xi) =\left(\sum_{i=1}^{n-1} \frac{{\xi_i}^2}{(1-\varepsilon)^2 \, \beta_i^2}\right)^{-\frac{1}{2}},
$$
where we have written in short $\xi_i$ for $\xi_i^k$. Then $ l_0^+(\xi)$ is such that $l_0^+(\xi) \xi \in \partial(\mathcal{U}_k(\varepsilon^+))$ and $ l_0^-(\xi)$ is such that $l_0^-(\xi) \xi \in \partial(\mathcal{U}_k(\varepsilon^-))$,
We write in short $l=l(\xi)$ and  get with constants $c$ that may change from line to line, 
\begin{eqnarray*}
&&\vol_n([x^\delta, \mathcal{E}]_R) - \vol_n(\mathcal{E})  \leq   \int_{S^{n-2}} \int_{l=0}^{l_0^+(\xi)} [f(\xi,l) -g_k(\xi, l)]  \, l^{n-2} \, dl\, d\sigma (\xi) \leq \\
&&\int_{S^{n-2}} \int_{l=0}^{l_0^+(\xi)} \Bigg[  \sum_{i=1}^{n-1}\frac{1}{2} \, \frac{a_n}{a_i^2} \,(l\,  \xi_i +x_i^\delta)^2 + x_n^\delta   - \\
&& \hskip 30mm  \frac{1}{2R} \left(
\sum_{i=1}^{n-1}  l^2 \, \xi_i^2 + 2\, R\, \left(\frac{a_n}{a_i^2} - \frac{1}{R}\right) l_0^+(\xi)\,  l \, \xi_i^2
\right)\Bigg]  \, l^{n-2} \, dl\, d\sigma (\xi) +c \, \varepsilon\\
\end{eqnarray*}
We integrate with respect to the variable $l$ and get
\begin{eqnarray*}
&& \int_{l=0}^{l_0^+(\xi)} \Bigg[  \sum_{i=1}^{n-1}\frac{1}{2} \, \frac{a_n}{a_i^2} \,(l\,  \xi_i +x_i^\delta)^2 + x_n^\delta  - 
\frac{1}{2R} \left(
\sum_{i=1}^{n-1}  l^2 \, \xi_i^2 + 2\, R\, \left(\frac{a_n}{a_i^2} - \frac{1}{R}\right) l_0^+(\xi)\,  l \, \xi_i^2
\right)\Bigg]  \, l^{n-2} \, dl\\
&&= \left(\frac{2 x_n^\delta  + \sum _{i=1}^{n-1}   \frac{a_n}{a_i^2} {x_i^\delta}^2}{\sum _{i=1}^{n-1}  \xi_i^2 \left(\frac{a_n}{a_i^2} -\frac{1}{R}\right)} \right)^\frac{n-1}{2}
\Bigg[ \frac{1}{n(n^2-1)} \left(2 x_n^\delta  + \sum _{i=1}^{n-1}   \frac{a_n}{a_i^2} {x_i^\delta}^2\right) + \\ 
&&\hskip 60mm  \sum _{i=1}^{n-1}   \frac{a_n}{a_i^2}\frac{\xi_i\,  {x_i^\delta}}{n} 
\left(\frac{2 x_n^\delta  + \sum _{i=1}^{n-1}   \frac{a_n}{a_i^2} {x_i^\delta}^2}{\sum _{i=1}^{n-1}  \xi_i^2 \left(\frac{a_n}{a_i^2} -\frac{1}{R}\right)} \right)^\frac{1}{2} \Bigg].
\end{eqnarray*}
Next we integrate over $S^{n-2}$. We notice   that 
$$
\int_{S^{n-2}} \sum _{i=1}^{n-1}   \frac{a_n}{a_i^2}\frac{\xi_i\,  {x_i^\delta}}{n} 
\left(\frac{2 x_n^\delta  + \sum _{i=1}^{n-1}   \frac{a_n}{a_i^2} {x_i^\delta}^2}{\sum _{i=1}^{n-1}  \xi_i^2 \left(\frac{a_n}{a_i^2} -\frac{1}{R}\right)} \right)^\frac{n}{2} =0.
$$
We can assume that  for $\delta$ small enough, with a constant $c>0$, $x_i^\delta \leq c \, x_n^\delta$, for $1 \leq i \leq n-1$, and 
obtain 
\begin{eqnarray*}
&\vol_n([x^\delta, \mathcal{E}]_R) - \vol_n(\mathcal{E}) \leq  \\
&  \frac{2^\frac{n+1}{2}\left( x_n^\delta  + \frac{1}{2} \sum _{i=1}^{n-1}   \frac{a_n}{a_i^2} {x_i^\delta}^2\right)^\frac{n+1}{2}}{n(n+1)}\, 
\vol_{n-1} (B^{n-1}_2) \,   \prod_{i=1}^{n-1}  \left(\frac{a_n}{a_i^2} - \frac{1}{R}\right)^{-\frac{1}{2}} + c \, \varepsilon \leq \\
&\frac{2^\frac{n+1}{2}\, {x_n^\delta}^\frac{n+1}{2}} {n(n+1)}\,  (1 + d\,  \varepsilon ^\frac{1}{2})^\frac{n+1}{2}\, 
\vol_{n-1} (B^{n-1}_2) \,   \prod_{i=1}^{n-1}  \left(\frac{a_n}{a_i^2} - \frac{1}{R}\right)^{-\frac{1}{2}} + c \, \varepsilon.
\end{eqnarray*}
For the second last inequality we have used Proposition \ref{integral}. We have also used (\ref{x-estimate}) and that the absolute constants may change from line to line.
Finally we observe that 
$$
x_n^\delta = \langle x^\delta, e_n\rangle=\|x^\delta\| \cos \theta.
$$
\par
\noindent
The inequality from below is done similarly, integrating over $\mathcal {U}_k(\varepsilon^-)$.
\end{proof}
\vskip 4mm
\noindent
Now we give the proof of Lemma \ref{limit}.
\begin{proof}
Let 
$x \in  \partial K$ with outer normal $N_K(x)$. We put for short $\left\langle \frac{x}{\|x\|}, N_K (x)\right\rangle =\cos \theta$. 
Note that as  $x$ and $x^\delta$ are colinear, we also have 
$$
\left\langle \frac{x}{\|x\|}, N_K (x)\right\rangle = \left\langle \frac{x^\delta}{\|x^\delta\|}, N_K (x)\right\rangle =\cos \theta.
$$
Moreover,  
\begin{equation}\label{cos}
\left\langle \frac{x}{\|x\|}, N_K (x)\right\rangle  \|x - x^\delta\|=\cos \theta  \,  \|x - x^\delta\|.
\end{equation}
Locally around $x$, $\partial K$  can be approximated by an ellipsoid $\mathcal{E}$. We  make this precise:
\newline
It will be convenient, also  for the proof,  to shift $K$ by $x$ and rotate $K$ such  that $x=0$ and such that $N_K(x)=-e_n$. Let $\varepsilon >0$  be given.
Let $\mathcal{E}$ be the ellipsoid (\ref{StandardEll2}) with center at  $a_{n} e_n$ and lengths of its principal axes $a_1, \dots, a_{n}$. 
We can also assume that the principal axes of $\mathcal{E}$ coincide with the basis vectors $e_1, \dots, e_{n-1}$ and that 
\begin{equation}\label{R+a}
R \geq a_n \geq \dots \geq a_1.
\end{equation}
Let $\mathcal{E} (\varepsilon^-)$ be the ellipsoid (\ref{StandardEll4}) centered at $ a_{n} e_n$ 
whose principal axes  coincide with the ones of $\mathcal{E}$,  but have lengths $(1-\varepsilon) a_1, \dots, (1-\varepsilon) a_{n-1}, a_{n}$. Similarly, let $\mathcal{E} (\varepsilon^+)$ be the ellipsoid (\ref{StandardEll5}) 
centered at $a_{n} e_n$, with the same principal axes as $\mathcal{E}$,  but with lengths $(1+\varepsilon) a_1, \dots, (1+\varepsilon) a_{n-1}, a_{n}$.
Then
$$ x=0 \in \partial \mathcal{E}  \hskip 3mm \text { and } \hskip 3mm N_{\mathcal{E}}(x) = N_K(x),$$
and (see, e.g., \cite{SW4})  there exists a $\Delta_\varepsilon >0$ such that 
\begin{eqnarray}\label{ellipse}
 H^-\left( \Delta_\varepsilon e_n, e_n\right)  \  \cap  \  \mathcal{E} (\varepsilon^-)
  \subseteq  H^-\left( \Delta_\varepsilon e_n, e_n\right) \  \cap \   K 
\subseteq H^-\left( \Delta_\varepsilon e_n, e_n\right) \  \cap  \  \mathcal{E} (\varepsilon^+) .
\end{eqnarray}
These inclusions explain why we call $\mathcal{E}$ the approximating ellipsoid to $\partial K$ in $x$.
Let $x^\delta \in \partial I^\delta_R(K)$. 
As $x^\delta \to x$ as $\delta \to 0$ and as $\mathcal{E}$ is the approximating ellipsoid to $\partial K$ in $x$, 
we can  choose $\delta$ so small that  for all support balls $z +R B^n_2$ to $K^R_\delta$  in $x_\delta$ we have
\begin{equation}\label{delta}
[x^\delta,  \mathcal{E} (\varepsilon^-)]_R \setminus  \mathcal{E} (\varepsilon^-) \subseteq   H^-\left( \Delta_\varepsilon e_n, e_n\right),  \hskip 4mm [x^\delta,  \mathcal{E} (\varepsilon^+)]_R \setminus  \mathcal{E} (\varepsilon^+) \subseteq   H^-\left( \Delta_\varepsilon e_n, e_n\right).
 \end{equation}
The relations (\ref{delta}) state that for small enough $\delta$,  all $R$-ball convex hulls with  $x^\delta$
are in this  set where $\mathcal{E}$ is a good approximation for $K$ in the sense of (\ref{ellipse}).
\par
\noindent
Let $H\left(x^\delta, e_n\right)$ be the hyperplane through $x^\delta$ and orthogonal to $e_n$. 
\par
\noindent
Let $-\langle \frac{x}{\|x\|}, N_K (x)\rangle  \|x-x^\delta\|  e_n = - \cos \theta \,  \|x^\delta\| e_n$ be the intersection of this hyperplane with the $-e_n$-axis.
Then 
\begin{eqnarray}
\delta &=& \vol_n([x^\delta, K]_R) - \vol_n(K)  \geq  \vol_n([x^\delta,  \mathcal{E}(\varepsilon^+) ]_R) - \vol_n( \mathcal{E}(\varepsilon^+)) \nonumber\\
&\geq&  \vol_n([- \cos \theta \,  \|x^\delta\| e_n,  \mathcal{E}(\varepsilon^+) ]_R) - \vol_n( \mathcal{E}(\varepsilon^+)).
\end{eqnarray}
The last inequality holds as the illuminated volume is minimal if $x^\delta$ is above $x=0$ in direction of the outer normal.
Shifting back by $x$, 
we get with Lemma \ref{ellipsoid} with $\Delta = \langle \frac{x}{\|x\|}, N_K (x)\rangle  \|x-x^\delta\|$ and small enough $\delta$, 
\begin{eqnarray}\label{nachunten}
\delta &\geq& 
 \frac{(1 - c\,  \varepsilon) \, 2^\frac{n+1}{2} \vol_{n-1}(B^{n-1}_2)}{n+1} \, 
 \left[ \langle \frac{x}{\|x\|}, N_K (x) \rangle \|x-x^\delta\| \right]^\frac{2}{n+1}
\left(\prod_{i=1}^{n-1} \left(\frac{a_n}{a_i^2} -\frac{1}{R}\right)\right)^{-\frac{1}{2}}, 
\end{eqnarray}
or, equivalently,
\begin{eqnarray*}
\frac{\langle \frac{x}{\|x\|}, N_K (x) \rangle \|x-x^\delta\|}{\delta^\frac{2}{n+1}} &\leq&(1 + c\, \varepsilon) \frac{1}{2} \left [ \frac{n+1}{\vol_{n-1}(B^{n-1}_2)}\right]^\frac{2}{n+1} \left(\prod_{i=1}^{n-1} \left(\frac{a_n}{a_i^2} -\frac{1}{R}\right)\right)^{\frac{1}{n+1}}\\
&=&(1 + c\, \varepsilon) \frac{1}{2} \left [ \frac{n+1}{\vol_{n-1}(B^{n-1)}_2)}\right]^\frac{2}{n+1} \left(\prod_{i=1}^{n-1} \left(\kappa_i(x,K) -\frac{1}{R}\right)\right)^{\frac{1}{n+1}}.
\end{eqnarray*}
The last equality follows from (\ref{pc}).
\vskip 2mm
\noindent
Now we treat the estimate from below. We keep the same coordinate setup as above. In particular,  $x=0$ is in $\partial K$ with outer normal $N_K(x)=N_K(0)=-e_n$.
Let $\delta >0$ be so small that (\ref{delta}) holds and let $x^\delta \in \partial I_R^\delta(K)$.
Recall that  $\theta$ is the angle between
$e_n$  and $x^\delta$.   Then
\begin{eqnarray}
\delta &=& \vol_n([x^\delta, K]_R) - \vol_n(K)  \leq  \vol_n([x^\delta,  \mathcal{E}(\varepsilon^-) ]_R) - \vol_n( \mathcal{E}(\varepsilon^-)) \nonumber\\
&\leq&  \vol_n([- \cos \theta \,  \|x^\delta\| e_n,  \mathcal{E}(\varepsilon^-) ]_R) - \vol_n( \mathcal{E}(\varepsilon^-))\\
&\leq& \frac{2^\frac{n+1}{2} \vol_{n-1}(B^{n-1}_2)}{n(n+1)} \, \frac{(\|x^\delta\| \cos \theta)^\frac{n+1}{2}}
{\left(\prod_{i=1}^{n-1} \left(\frac{a_n}{a_i^2} -\frac{1}{R}\right)\right)^\frac{1}{2}} + c \, \varepsilon.
\end{eqnarray}
In the last inequality we have again used Lemma (\ref{ellipsoid}). We continue similarly as above.
\end{proof}
\vskip 4mm
\subsection{Proof of Proposition \ref{R-convex}}
Let $x \in \partial K$ with outer normal $N_K(x)=u$ and let $x^\delta \in \partial I_R^\delta(K)$ be as in (\ref{xdelta}). Then $h_K(u) = \langle \frac{x}{\|x\|}, u \rangle = \langle \frac{x^\delta}{\|x^\delta\|}, u\rangle$. As $x^\delta \to x$ as $\delta \to 0$, we get for small enough $\delta$, with an absolute constant $d$, 
\begin{eqnarray*}\label{}
\langle \frac{x}{\|x\|}, u \rangle \|x^\delta-x\| \left(1- d \|x^\delta-x\| \right) \leq h_{I_{R}^\delta(K)}(u) - h_K(u) \leq \langle \frac{x}{\|x\|}, u \rangle  \|x^\delta-x\|.
\end{eqnarray*}
It now follows from Lemma \ref{limit} that for $\delta$ small enough
\begin{eqnarray*}\label{}
h_{I_{R}^\delta(K)}(u) =h_K(u) +d_n \delta^\frac{2}{n+1} \left(\prod_{i=1}^{n-1} \left(\kappa_i(x,K) -\frac{1}{R}\right)\right)^{\frac{1}{n+1}} + O(\delta^\frac{4}{n+1}).
\end{eqnarray*}
Now we stay in dimension $2$. A $C^2_+$ planar body $K$ is $R$-ball convex iff for all $\theta \in[0,2\pi]$
\[ \rho_K(\theta) = h_K(\theta) - h_K(\theta)^{\prime \prime} \leq R.
\]
Now by above,
\[ \rho_{I_{R}^\delta(K)}(\theta) = h_K(\theta) +d_2 \delta^\frac{2}{3} \left[ \left(\frac{1}{\rho_K(\theta)} - \frac{1}{R}\right)^\frac{1}{3} +  \left(\left(\frac{1}{\rho_K(\theta)} - \frac{1}{R}\right)^\frac{1}{3}\right)^{\prime \prime}\right] +O(\delta^\frac{4}{3}).
\]
Thus $I_{R}^\delta(K)$ is $\tilde{R}$-ball convex with
\[ \tilde{R} = \max_{\theta \in [0,  2 \pi] }\left( \rho_K(\theta) +d_2 \delta^\frac{2}{3} \left[ \left(\frac{1}{\rho_K(\theta)} - \frac{1}{R}\right)^\frac{1}{3} +  \left(\left(\frac{1}{\rho_K(\theta)} - \frac{1}{R}\right)^\frac{1}{3}\right)^{\prime \prime}\right] +O(\delta^\frac{4}{3})\right) .
\]
\vskip 4mm
\subsection{Proof of Theorem \ref{theorem:weighted limit} }
The proof of Theorem \ref{theorem:weighted limit}  follows along the same lines as the one of Theorem \ref{theorem:limit}  with some modification that we outline now.
\vskip 3mm
\noindent
We can assume without loss of generality that $0 \in \text{int} (K)$. 
Again, we put  for $x\in\partial K$, 
\begin{equation}\label{xdelta2}
x^\delta =\{\alpha x:\alpha\geq 0 \}\cap \partial (I_{R, \varphi} ^\delta(K)).
 \end{equation}
We  apply  Lemma \ref{vol-diff}  with $L=I_{R, \varphi}^\delta (K)$,  which is possible by Remark 1.  This means that 
\begin{equation}\label{proofstep1,2}
\lim_{\delta \to 0} \frac{\vol_n\left(I_{R, \varphi} ^\delta(K)\right)-\vol_n(K) } {\delta^\frac{2}{n+1}} = \frac{1}{n} \lim_{\delta \to 0} \int_{\partial K} \frac{\langle x, N_K(x) \rangle}{\delta^\frac{2}{n+1}} \left[\left(\frac{\|x^\delta\|}{\|x\|} \right)^n-1 \right] \, d \mu_K(x). 
\end{equation}
\vskip 2mm
\noindent
Now we  show the analog of Lemma \ref{bounded}, i.e., we show that for $x^\delta \in \partial(I_{R, \varphi}^\delta (K) )$,
$$
\frac{\langle x, N_K(x) \rangle}{\delta^\frac{2}{n+1}} \left[\left(\frac{\|x^\delta\|}{\|x\|} \right)^n-1 \right] 
$$ 
is bounded above by an integrable function, namely a multiple of the rolling function $r(x)^\frac{n-1}{n+1}$.
 As $I_{R, \varphi}^\delta (K) \to K$ in the Hausdorff metric as $\delta \to 0$, there is $\delta_1$  that for all $\delta \leq \delta_1$, 
$I_{R, \varphi}^\delta (K)$ is a compact star body contained in $U$.  Since $\varphi:U\to (0,+\infty)$ is continuous, we thus have that there exist $0 < m(\varphi) \leq M(\varphi) < \infty$ such that for all $y \in I_{R, \varphi}^\delta (K)$,
\begin{equation*}
    m_{\varphi}\leq \varphi(y) \leq M_\varphi.
\end{equation*}
Then
\begin{equation*}
    \delta =  \int_{[x^\delta,K]_R\setminus K} \varphi(y) \, dy \geq m_\varphi \vol_n([x^\delta,K]_R\setminus K).
\end{equation*}
We put $\tilde{\delta}=\frac{\delta}{m_\varphi}$ and note that the previous inequality implies that $x^\delta \in I_{R}^{\tilde{\delta}} (K)$.  We set
\begin{equation*}
\tilde{x}^{\tilde{\delta}} =\{\alpha x:\alpha\geq 0 \}\cap \partial (I_{R} ^{\tilde{\delta}}(K)).
 \end{equation*}
Then $\|x^\delta\| \leq \|\tilde{x}^{\tilde{\delta}}\|$  and thus by Lemma \ref{bounded} for $\delta < \min\{ \delta_0, \delta_1\}$, 
\begin{eqnarray}\label{bounded2}
\frac{1}{n} \frac{\langle x, N_K(x) \rangle}{\delta^\frac{2}{n+1}} \left[\left(\frac{\|x^\delta\|}{\|x\|} \right)^n-1 \right]  \leq 
\frac{1}{n} \frac{\langle x, N_K(x) \rangle}{m_\varphi^\frac{2}{n+1} \tilde{\delta}^\frac{2}{n+1}} \left[\left(\frac{\|\tilde{x}^{\tilde{\delta}}\|}{\|x\|} \right)^n-1 \right] \leq  \gamma_n \, r_{K}(x)^{-\frac{n-1}{n+1}},
\end{eqnarray}
where $\gamma_n$ and $\delta_0$ are as in Lemma \ref{bounded}.
With (\ref{bounded2}) and  Lebesgue Dominated Convergence Theorem, we get as the next step of the proof of Theorem \ref{theorem:weighted limit}
\begin{equation*}\label{proofstep2}
\lim_{\delta \to 0} \frac{\vol_n\left(I_{R, \varphi} ^\delta(K)\right)-\vol_n(K) } {\delta^\frac{2}{n+1}} = \frac{1}{n}  \int_{\partial K} \lim_{\delta \to 0} \frac{\langle x, N_K(x) \rangle}{\delta^\frac{2}{n+1}} \left[\left(\frac{\|x^\delta\|}{\|x\|}\right)^n -1 \right] \, d \mu_K(x). 
\end{equation*}
\vskip 2mm
\noindent
Now we  show the analog of Lemma \ref{limit}.  For  $x \in \partial K$, let $x^\delta$ be as in (\ref{xdelta2}). As in (\ref{proofstep3}), we write 
\begin{eqnarray*}
\frac{\langle \frac{x}{\|x\|}, N_K(x) \rangle}{\delta^\frac{2}{n+1}} \|x^\delta-x\| &\leq&\frac{1}{n} \frac{\langle x, N_K(x) \rangle}{\delta^\frac{2}{n+1}} \left[\left(\frac{\|x^\delta\|}{\|x\|}\right)^n -1 \right] \nonumber \\
& \leq &
\frac{\langle \frac{x}{\|x\|}, N_K(x) \rangle}{\delta^\frac{2}{n+1}} \|x^\delta-x\| \left(1+ \beta_n \frac{\|x^\delta-x\|}{\|x\|}\right),
\end{eqnarray*}
where $\beta_n$ is a constant. 
Let $\varepsilon >0$ be given. As $\varphi: U \to (0, \infty)$ is continuous, there exists an open neighborhood $V$ of $x$ such that for all $y \in V$,
\begin{equation}\label{stetig}
(1-\varepsilon) \, \varphi(x) \leq \varphi (y) \leq (1+\varepsilon) \, \varphi(x). 
\end{equation}
We choose $\delta$ so small that  $[x^\delta,K]_R\setminus K \subset V$. Then, by (\ref{nachunten}), 
\begin{eqnarray*}
 \delta &= & \int_{[x^\delta,K]_R\setminus K} \varphi(y) \, dy \geq (1-\varepsilon) \, \varphi(x)\, \vol_n([x^\delta,K]_R\setminus K)\\
 & \geq &
\frac{(1 - c\,  \varepsilon) \, 2^\frac{n+1}{2} \vol_{n-1}(B^{n-1}_2)}{n+1} \, \varphi(x)\,
 \left[ \langle \frac{x}{\|x\|}, N_K (x) \rangle \|x-x^\delta\| \right]^\frac{2}{n+1}
\left(\prod_{i=1}^{n-1} \left(\frac{a_n}{a_i^2} -\frac{1}{R}\right)\right)^{-\frac{1}{2}}, 
\end{eqnarray*}
or, equivalently, 
\begin{eqnarray*}
\frac{\langle \frac{x}{\|x\|}, N_K (x) \rangle \|x-x^\delta\|}{\delta^\frac{2}{n+1}} &\leq&(1 + c\, \varepsilon) \frac{1}{2} \left [ \frac{n+1}{\vol_{n-1}(B^{n-1}_2}\right]^\frac{2}{n+1} \, 
\varphi(x)^{-\frac{2}{n+1}}\, \left(\prod_{i=1}^{n-1} \left(\frac{a_n}{a_i^2} -\frac{1}{R}\right)\right)^{\frac{1}{n+1}}\\
&=&(1 + c\, \varepsilon) \frac{1}{2} \left [ \frac{n+1}{\vol_{n-1}(B^{n-1}_2}\right]^\frac{2}{n+1}\, \varphi(x)^{-\frac{2}{n+1}}\,  \left(\prod_{i=1}^{n-1} \left(\kappa_i(x,K) -\frac{1}{R}\right)\right)^{\frac{1}{n+1}}.
\end{eqnarray*}
The estimate from below is done similarly, using again (\ref{stetig}) and Lemma \ref{limit}.
\vskip 2mm
\noindent
This finishes the proof of Theorem \ref{theorem:weighted limit}.

\end{document}